\theoremstyle{plain}
\theoremstyle{definition}
\newtheorem{theorem}{Theorem}
\newtheorem{lemma}{Lemma}
\newtheorem{definition}{Definition}
\newtheorem{corollary}{Corollary}
\newtheorem{proposition}{Proposition}
\newtheorem{example}{Example}
\newcommand{\beq}{\begin{equation}}
\newcommand{\eeq}{\end{equation}}
\newcommand{\tp}{^\top}
\def\min{\operatorname{min}}
\def\max{\operatorname{max}}
\DeclareMathOperator{\grad}{grad}
\DeclareMathOperator{\diag}{diag}
\DeclareMathOperator{\inte}{int}
\newcommand{\SP}{\mathbb S}
\newcommand{\R}{\mathbb R}
\begin{document}

\title{  On the spherical quasi-convexity of quadratic functions 
\thanks{This work was supported by CNxy (Grants  302473/2017-3 and 408151/2016-1) and FAPEG.}}

\author{
 O. P. Ferreira\thanks{IME/UFG, Avenida Esperan\c{c}a, s/n, Campus II,  Goi\^ania, GO - 74690-900, Brazil (E-mails: {\tt orizon@ufg.br}).}
\and
S. Z. N\'emeth \thanks{School of Mathematics, University of Birmingham, Watson Building, Edgbaston, Birmingham - B15 2TT, United Kingdom
(E-mail: {\tt s.nemeth@bham.ac.uk}, {\tt LXX490@student.bham.ac.uk}).}
\and
L. Xiao  \footnotemark[3]
}
\maketitle
\begin{abstract}
 In this paper the  spherical quasi-convexity of quadratic functions on spherically convex sets is studied. Several conditions characterizing  the  spherical quasi-convexity of quadratic functions  are presented. In particular, conditions implying spherical quasi-convexity of quadratic functions on the spherical positive orthant are  given. Some examples are provided as an application of the obtained results.
\end{abstract}
\noindent
{\bf Keywords:} Sphere, Spheric quasi-convexity \and  quadratic functions \and positive orthant.
\section{Introduction}
In this paper we study the spherical quasi-convexity of quadratic functions on spherically convex sets, which is related to the problem of finding
their minimum. This problem of minimizing a quadratic function on the sphere arised to S. Z. N\'emeth by trying to make certain fixed point theorems,
surjectivity theorems, and existence theorems for complementarity problems and variational inequalities more explicit; see
\cite{IsacNemeth2003,IsacNemeth2004,IsacNemeth2005,IsacNemeth2006,IsacNemeth2008}. In
particular, some existence theorems could be reduced to optimizing a quadratic function on the intersection of the sphere and a cone. Indeed, from
\cite[Corollary~8.1]{IsacNemeth2006} and \cite[Theorem18]{Nemeth2006} it follows immediately that for a closed convex cone $K\subseteq\R^n$ with
dual $K^*$, and a continuous mapping $F:\R^n\to\R^n$ with $G:\R^n\to\R^n$ defined by $G(x)=\|x\|^2F(x/\|x\|^2)$ and $G(0)=0$  differentiable at $0$,
if $\min_{\|u\|=1,u\in K}\langle DG(0)u, u\rangle>0$, where $DG(0)$ is the Jacobian matrix of $G$ at $0$, then the  nonlinear complementarity
problem defined by $K\ni x\perp F(x)\in K^*$ has a solution. Thus, we need to minimize a quadratic form on the intersection between a cone and the
sphere. These sets are exactly the spherically convex sets;  see \cite{FerreiraIusemNemeth2014}. Therefore, this leads to minimizing quadratic
functions on spherically convex sets.  In fact the optimization problem above reduces to the problem of calculating the scalar derivative,
introduced by S. Z. N\'emeth in  \cite{Nemeth1992,Nemeth1993,Nemeth1997},  along cones; see \cite{Nemeth2006}.    Similar minimizations  of
quadratic functions on spherically convex sets  are needed in the other settings; see \cite{IsacNemeth2003,IsacNemeth2004,IsacNemeth2005}.  Apart from the above, the motivation of this study is much wider. For instance, the quadratic constrained optimization problem
on the sphere
\begin{equation}\label{eq:gp}
	\min \{\langle Qx, x\rangle~:~x\in C\}, \qquad  C\subseteq\SP^{n-1}:=\left\{ x\in \R^{n}~:~\|x\|=1\right\}, 
\end{equation}
for a symmetric matrix $Q$, is a minimal eigenvalue problem in $C$, which includes the problem of finding the spectral norm of the matrix \(-Q\) when $C=\SP^{n-1}$ (see,
e.g.,~\cite{Smith1994}). It is  important to  highlight  that, the special case when $C$ is the nonnegative orthant is of particular interest because the nonnegativity of the minimal
value is equivalent to the copositivity of the matrix $Q$ \cite[Proposition~1.3]{UrrutySeeger2010} and to the nonnegativity of all Pareto 
eigenvalues of $Q$ \cite[Theorem~4.3]{UrrutySeeger2010}. As far as we are aware there are no methods for finding the Pareto spectra by using the
intrinsic geometrical properties of the sphere, hence our study is expected to open new perspectives for detecting the copositivity of a symmetric matrix.  The problem \eqref{eq:gp} also contains the trust region problem that appears in many nonlinear programming  algorithms
as a sub-problem, see \cite{DennisSchnabel1983, HagerPark2005, Hager2001}.
 
Optimization problems posed on the sphere, have a specific underlining algebraic structure that could be exploited to greatly reduce
the cost of obtaining the solutions; see \cite{Hager2001, HagerPark2005, Smith1994,So2011,Zhang2003,Zhang2012}. It is
worth to point out that when a quadratic function is spherically quasi-convex, then the spherical strict local minimum is equal to the spherical strict
global minimum.   Therefore, it is natural to
consider the problem of determining the spherically quasi-convex quadratic functions on spherically convex sets. The goal of the paper is to
present necessary conditions and sufficient conditions for quadratic functions which are spherically quasi-convex on spherical convex sets. As a
particular case, we exhibit several such results for the spherical positive orthant.  

The remainder of this paper is organized as follows. In Section~\ref{sec:int.1}, we recall some notations and basic results used throughout  the
paper. In Section~\ref{sec:cc} we present some general properties of spherically quasi-convex functions on spherically convex sets. In
Section~\ref{sec:qcqfcs}  we present  some conditions   characterizing   quadratic spherically quasi-convex functions on a general  spherically convex set.
In Section~\ref{sec:qcqfpo} we  present some  properties of  quadratic  function defined in the spherical positive orthant.  We conclude this paper by
making some final remarks in Section~\ref{sec;FinalRemarks}.

\section{Basics results} \label{sec:int.1}
In this section we present  the notations and the auxiliary results used throughout the paper. Let $\R^n$ be the $n$-dimensional Euclidean 
space with the canonical inner product $\langle\cdot,\cdot\rangle$ and  norm $\|\cdot\|$. The set of all $m
\times n$ matrices with real entries is denoted by $\R^{m \times n}$ and $\R^n\equiv \R^{n \times 1}$. Denote by $\R^n_+$ the nonnegative orthant and by 
$\R^n_{++}$ the positive orthant, that is, \[\R^n_+=\{x=(x_1,\dots,x_n)\tp:x_1\ge0,\dots,x_n\ge0\}\] and
\[\R^n_{++}=\{x=(x_1,\dots,x_n)\tp:x_1>0,\dots,x_n>0\}.\]
Denote by $e^i$ the $i$-th canonical unit vector in $\R^n$. A set $\cal{K}$ is called a {\it cone} if it is invariant under the multiplication with
positive scalars and a {\it convex cone} if it is a cone which is also a convex set. The {\it dual cone} of a cone ${\cal{K}} \subset \R^n$ is the cone 
${\cal{K}}^*\!\!:=\!\{ x\in \R^n : \langle x, y \rangle\!\geq\! 0, ~ \forall \, y\!\in\! {\cal{K}}\}.$   A cone \( {\cal K} \subset \R^{n}\)  is  
called {\it pointed} if \({\cal K} \cap \{-{\cal K}\} \subseteq \{0\}\), or equivalently,    if \({\cal K}\) does not contain straight  lines
through the origin.  Any pointed closed convex cone with nonempty interior will be called {\it proper cone}. The cone ${\cal K}$ is called {\it subdual} if
${\cal K}\subset {\cal K}^*$, {\it superdual} if ${\cal K}^*\subset {\cal K}$ and {\it self-dual} if ${\cal K}^*={\cal K}$. The matrix ${\rm I_n}$ denotes the
$n\times n$ identity matrix.    Recall that  $A=(a_{ij})\in \R^{n \times n}$ is {\it positive} if  $a_{ij}>0$ and {\it nonnegative}  if
$a_{ij}\geq 0$ for all $i,j=1, \ldots, n$.  A matrix   $A \in \R^{n \times n}$ is {\it reducible} if there is permutation matrix  $P \in \R^{n
\times n}$ so that 
$$
P^{T}AP=
\begin{bmatrix} 
B_{11} & B_{12} \\
0 & B_{22}
\end{bmatrix}, \quad   B_{11}\in \R^{m \times m}, ~B_{22}\in \R^{(n-m) \times (n-m)}, ~ B_{12}\in  \R^{m \times (n-m)},  \quad m<n.
$$
A matrix   $A \in \R^{n \times n}$ is {\it irreducible} if  it not reducible.  In the following  we state a version of   {\it Perron-Frobenius
theorem} for both positive matrices and nonnegative irreducible matrices,  its proof  can be found in \cite[Theorem~8.2.11,
p.500]{HornJohnson85} and  \cite[Theorem~8.4.4, p.508]{HornJohnson85}, respectively.

\begin{theorem} \label{Perron-Frobenius theorem1}
Let $A\in \R^{n \times n}$  be  either positive or nonnegative and  irreducible. Then,  $A$ has a dominant eigenvalue  $\lambda_{max}(A)\in \R$ with  associated eigenvector $v\in \R^n$ which satisfies the following properties:
\begin{itemize}
\item[i)] The  eigenvalue $\lambda_{max}(A)>0$ and its  associated eigenvector $v\in \R^n_{++}$;
\item[ii)] The eigenvalue $\lambda_{max}(A)>0$ has multiplicity one; 
\item[iii)] Every other  eigenvalue  $\lambda$ of $A$ is less that $\lambda_{max}(A)$ in absolute value, i.e, $|\lambda|<\lambda_{max}(A)$;
\item[iii)] There are no other positive  or  non-negative eigenvectors  of $A$ except positive multiples of $v$.
\end{itemize}
\end{theorem}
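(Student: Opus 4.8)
The plan is to establish all four properties at once through the Collatz--Wielandt variational characterization of the dominant eigenvalue, handling the positive case and the nonnegative-irreducible case in parallel by exploiting the fact that irreducibility of a nonnegative $A$ forces $({\rm I_n}+A)^{n-1}$ to be a positive matrix. First I would introduce, for $x\in\R^n_+\setminus\{0\}$, the ratio
\[
r(x)=\min_{i:\,x_i>0}\frac{(Ax)_i}{x_i},
\]
which is the largest $t$ with $Ax\ge tx$, and set $\lambda_{max}(A)=\sup\{r(x):x\in\R^n_+,\ \|x\|=1\}$. Since the unit simplex is compact and $r$ is upper semicontinuous, the supremum is attained at some $v\ge0$, $v\neq0$. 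The core of this step is to argue that $v$ is genuinely an eigenvector with $Av=\lambda_{max}(A)v$: were some coordinate of $Av-\lambda_{max}(A)v$ strictly positive, a small perturbation of $v$ would strictly increase $r$, contradicting maximality. To get $v\in\R^n_{++}$ I would use positivity of $A$ directly in the positive case, and in the irreducible case apply $({\rm I_n}+A)^{n-1}$ to the nonnegative eigenvector to upgrade it to a strictly positive one, which remains an eigenvector for the same eigenvalue. Positivity of the eigenvalue then follows at once, since $Av>0$ and $v>0$ give $\lambda_{max}(A)=(Av)_i/v_i>0$.

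For the dominance property I would take any, possibly complex, eigenpair $Aw=\lambda w$ and apply the triangle inequality entrywise, using $A\ge0$, to obtain $|\lambda|\,|w|\le A|w|$; evaluating $r(|w|)$ yields $|\lambda|\le\lambda_{max}(A)$. The strict inequality is the delicate point: equality $|\lambda|=\lambda_{max}(A)$ forces equality in the triangle inequality, and, exploiting positivity of $A$ (respectively of $({\rm I_n}+A)^{n-1}$), one shows this can occur only when all entries of $w$ share a common phase, i.e. only for $\lambda=\lambda_{max}(A)$. Uniqueness of the nonnegative eigenvector I would derive by pairing against the positive left Perron eigenvector $u$ of $A\tp$: if $Ax=\mu x$ with $x\ge0$, $x\neq0$, then $\mu\langle u,x\rangle=\langle u,Ax\rangle=\langle A\tp u,x\rangle=\lambda_{max}(A)\langle u,x\rangle$, and $\langle u,x\rangle>0$ forces $\mu=\lambda_{max}(A)$; simplicity then pins $x$ down to a positive multiple of $v$.

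The main obstacle I anticipate is the simplicity claim, that $\lambda_{max}(A)$ has multiplicity one. Geometric simplicity follows comfortably from the strict positivity of the eigenvector together with the equality-case analysis above, but ruling out a nontrivial Jordan block requires more, for instance showing that the derivative of the characteristic polynomial does not vanish at $\lambda_{max}(A)$, or equivalently that the right and left Perron eigenvectors are not orthogonal, which is immediate once both are known to be strictly positive. This, together with making the equality-case analysis in the triangle inequality fully rigorous in the merely irreducible case via the $({\rm I_n}+A)^{n-1}$ device, is where the bulk of the effort lies; the remaining steps become routine once the variational characterization is in place.
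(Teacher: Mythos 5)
You should first note that the paper contains no proof of this theorem at all: it is quoted as a known result, with the positive case referred to \cite[Theorem~8.2.11]{HornJohnson85} and the irreducible case to \cite[Theorem~8.4.4]{HornJohnson85}. So your Collatz--Wielandt outline is not ``the paper's approach,'' but it is the standard self-contained route, and most of it is sound: the attainment of $\sup r$ via upper semicontinuity on the simplex works (for each $i$ with $x_i>0$ one has $x^k_i>0$ eventually, so $\limsup_k r(x^k)\le (Ax)_i/x_i$); the upgrade of the nonnegative maximizer to a strictly positive eigenvector via $({\rm I_n}+A)^{n-1}$ is correct; the pairing with the positive left eigenvector of $A\tp$ correctly pins down all nonnegative eigenvectors; and once geometric simplicity and strict positivity of both left and right eigenvectors are in hand, $\langle u,v\rangle>0$ does rule out a nontrivial Jordan block, so algebraic simplicity follows as you say.

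The genuine gap is the strict dominance claim in the merely irreducible case: item iii), $|\lambda|<\lambda_{max}(A)$ for every other eigenvalue, is \emph{false} for irreducible nonnegative matrices, so no completion of your equality-case analysis can establish it. Take $A=\begin{pmatrix}0&1\\1&0\end{pmatrix}$: it is nonnegative and irreducible, $\lambda_{max}(A)=1$, yet $-1$ is an eigenvalue with $|-1|=\lambda_{max}(A)$; here $w=(1,-1)\tp$ attains equality in the entrywise triangle inequality $|\lambda||w|\le A|w|$, $|w|$ is the Perron vector, and the entries of $w$ do \emph{not} share a common phase. Your proposed repair via the $({\rm I_n}+A)^{n-1}$ device cannot close this: the eigenvalues of $B=({\rm I_n}+A)^{n-1}$ are $(1+\mu)^{n-1}$, and strict dominance for the positive matrix $B$ only yields $|1+\mu|<1+\lambda_{max}(A)$ for $\mu\ne\lambda_{max}(A)$, which does not imply $|\mu|<\lambda_{max}(A)$ (in the example, $|1+(-1)|=0<2$ while $|-1|=1$). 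This is precisely why Horn and Johnson's Theorem~8.4.4 omits strict modulus dominance in the irreducible case and asserts it only for positive (more generally, primitive) matrices in Theorem~8.2.11; the statement in the paper conflates the two citations on this point. Your plan therefore proves the theorem as stated for positive $A$, and proves all items except iii) for irreducible $A$; as written, the delicate step you flagged is not merely delicate but impossible, and the honest fix is to weaken iii) to $|\lambda|\le\lambda_{max}(A)$ (or assume primitivity) in the irreducible case.
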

Remind  that   $A\in \R^{n \times n}$ is {\it copositive} if  $ \langle Ax, x \rangle\!\geq\! 0$ for all $x\in \R^n_+$  and  a {\it Z-matrix} is a matrix with nonpositive off-diagonal elements. Let ${\cal K} \subset \R^n $ be  a  pointed closed convex cone  with nonempty interior,  the  {\it ${\cal K}$-Z-property} of a matrix $A\in\R^{n\times n}$ means that 
$ \langle Ax,y\rangle \le0$, for any $(x,y)\in C({\cal K})$, where $C({\cal K}):=\{(x,y)\in\R^n\times\R^n:~x\in {\cal K},\textrm{ }y\in {\cal
K}^*,  \langle x, y \rangle=0\}$.  If $x=(x_1,\dots,x_n)\tp\in \R^n$, then $\diag (x)$ will denote an
$n\times n$  diagonal matrix with $(i,i)$-th entry equal to $x_i$, for $i=1,\dots,n$.  Throughout the paper the {\it \(n\)-dimensional  Euclidean sphere}  $\SP^{n-1}$ and its {\it tangent hyperplane at a point \(x\in \SP^{n-1}\)} are denoted, respectively, by
\[
\SP^{n-1}:=\left\{ x=(x_1, \ldots, x_{n})\tp \in \R^{n}~:~ \|x\|=1\right\}, \qquad T_{x}\SP^{n-1}:=\left\{v\in \R^n ~:~ \langle x, v \rangle=0 \right\},
\]
The  {\it intrinsic distance on the sphere} between two arbitrary points \(x, y \in \SP^{n-1}\)  is  defined by
\begin{equation} \label{eq:Intdist}
d(x, y):=\arccos \langle x , y\rangle.
\end{equation}
It can  be shown that   \((\SP^{n-1}, d)\) 
is a complete metric space, so that $d(x, y)\ge 0$ for all $x, y \in \SP^{n-1}$, and 
$d(x, y)=0$ if and only if $x=y$. It is also easy to check that $d(x, y)\leq \pi$ for all $x, y\in \SP^{n-1}$,
and $d(x, y)=\pi$ if and only if $x=-y$. The intersection curve of a plane though the origin of \(\R^{n}\) 
with the sphere \( \SP^{n-1}\) is called a { \it geodesic}.   If \(x, y\in \SP^{n-1}\) are such that \(y\neq  x\) and \(y\neq -x\), then the   unique {\it segment of  minimal  geodesic from to \(x\) to \(y\) } is 
\begin{equation} \label{eq:gctp}
\gamma_{xy}(t)= \left( \cos (td(x, y)) - \frac{\langle x, y\rangle \sin (td(x, y))}{\sqrt{1-\langle x, y\rangle^2}}\right) x 
+ \frac{\sin (td(x, y))}{\sqrt{1-\langle x, y\rangle^2}}\;y, \qquad t\in [0, \;1].
\end{equation}
Let \(x\in \SP^{n-1}\) and  \(v\in T_{x}\SP^{n-1}\) such that \(\|v\|=1\).  The  minimal segment of geodesic 
connecting  \(x\) to \(-x\),  starting at \(x\) with velocity \(v\)  at \(x\) is given by
\begin{equation} \label{eq:gctpp}
 \gamma_{x\{-x\}}(t):=\cos(t) \,x+ \sin(t)\, v, \qquad t\in  [0, \;\pi]. 
\end{equation}
Let \(\Omega \subset \SP^{n-1}\) be a spherically open set (i.e., a set open with respect to the induced topology in $\SP^{n-1}$). The {\it
gradient on the sphere} of a differentiable function \(f: \Omega \to  \R\) 
at a point \(x\in \Omega\) is the vector defined by
\begin{equation} \label{eq:grad}
\grad f(x):= \left[{\rm I_n}-xx^T \right] Df(x)= Df(x)- \langle Df(x) , x \rangle \,x,
\end{equation}
where \(Df(x) \in \R^{n}\) is the usual gradient of  \( f\) at \(x\in \Omega\).   Let ${\cal D}\subseteq\R^n$ be an open set, \(I\subset \R\) an open interval,  \(\Omega \subset
\SP^{n-1}\)  a spherically open set   
and   \(\gamma:I\to \Omega\) a  geodesic  segment. If \(f:{\cal D} \to \R\)  is a differentiable 
function, then,  since  \( \gamma'(t)\in T_{\gamma(t)} \SP^{n-1}\) for all \(t\in I\), the equality \eqref{eq:grad} implies 
\begin{equation} \label{eq:cr1}
\frac{d}{dt}f(\gamma(t)) =\left\langle \grad f(\gamma(t)), \gamma'(t) \right\rangle=
\left\langle D f(\gamma(t)), \gamma'(t) \right\rangle, \qquad \forall ~ t\in I.
\end{equation}
Let \( {\cal C}\subset\SP^{n-1}\) be a spherically convex set and $I\subset \R$ be  an interval.  
A function \(f:{\cal C} \to \R\)  is said to be spherically convex (respectively, strictly spherically convex) 
if for any minimal geodesic  segment \(\gamma:I\to {\cal C}\), the composition 
\( f\circ \gamma :I\to \R\) is convex (respectively, strictly  convex) in the usual sense. 

We end this section by stating some standard notations. 
We denote the {\it spherically open} and the {\it spherically closed ball} with radius \(\delta >0\) and center in 
\(x\in \SP^{n-1}\) by
$B_{\delta}(x):=\{y\in \SP^{n-1} : d(x, y)<\delta \}$ 
and
$\bar{B}_{\delta}(x):=\{y\in \SP^{n-1} : d(x, y)\leq \delta \}$,
respectively. The {\it sub-level sets} of a function  \(f:\R^n\supseteq{\cal M} \to \R\)   are denoted by  
\begin{equation} \label{eq:sls}
[f\leq c]:=\{x\in {\cal M} :\; f(x)\leq c\}, \qquad c\in \R.
\end{equation}

\section{Spherically quasi-convex functions   on spherically convex sets}\label{sec:cc}
In this section we study the general  properties of quasi-convex functions on the sphere. In particular,
we  present  the  first  order characterizations of differentiable quasi-convex functions on the sphere. Several results of this section have
already appeared in \cite{Nemeth1998}, but here these results have more explicit statements and proofs.   It is worth to remark that  the  quasi-convexity concept  generalize the convexity one, which was extensively studied in \cite{FerreiraIusemNemeth2014}.
\begin{definition}\label{def:cf}
The set \({\cal C} \subseteq \SP^{n-1}\) is said to be  \emph{spherically convex} if for any \(x\), \(y\in {\cal C} \) 
all the minimal geodesic segments joining \(x\) to \(y\)  are contained in \( {\cal C} \). 
\end{definition}

\begin{example} 
The set  \( S_{+}=\{(x_1,\dots, x_{n})\in \SP^{n-1}\, : \, x_{1}\geq 0,\dots, x_{n}\geq 0 \} \)    is  a closed spherically convex set.
\end{example}
We assume  for convenience that {\it from now on all spherically convex sets are nonempty proper 
subsets of the sphere}.  For each set \( A \subset \SP^{n-1}\), let \(K_A\subset\R^{n}\) be the {\it cone spanned by} \(A\), 
namely, 
\begin{equation} \label{eq:pccone}
K_A:=\left\{ tx \, :\, x\in A, \; t\in [0, +\infty) \right\}.
\end{equation}
Clearly, \(K_A\) is
the smallest cone which contains \(A\).  In the next proposition we exhibit a relationship of spherically convex sets with the cones  spanned by
them; for the  proof see  \cite{FerreiraIusemNemeth2013}.
\begin{proposition} \label{pr:ccs}
The set \({\cal C}\) is   spherically convex if and only if  the cone \(K_{\cal C}\) is convex and pointed.
\end{proposition}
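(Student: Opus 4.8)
The plan is to exploit the standard dictionary between minimal geodesic arcs on \(\SP^{n-1}\) and unit-normalized nonnegative combinations of their endpoints in \(\R^n\). Before separating the two implications I would record one elementary identity valid for every \({\cal C}\subseteq\SP^{n-1}\), namely \({\cal C}=K_{\cal C}\cap\SP^{n-1}\): the inclusion \(\subseteq\) is immediate from \eqref{eq:pccone}, while if \(z\in K_{\cal C}\) with \(\|z\|=1\) then \(z=tx\) for some \(x\in{\cal C}\) and \(t\ge 0\), and matching norms forces \(t=1\), so \(z=x\in{\cal C}\). This identity is what lets me pass freely between membership in the cone (for unit vectors) and membership in \({\cal C}\).

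The geometric heart is an observation about the curve \eqref{eq:gctp}. Fix \(x,y\in\SP^{n-1}\) with \(y\neq\pm x\) and set \(c=\langle x,y\rangle=\cos d\), \(d=d(x,y)\in(0,\pi)\). Substituting \(c=\cos d\) and \(\sqrt{1-c^2}=\sin d\) into \eqref{eq:gctp}, the coefficient of \(x\) collapses to \(\sin((1-t)d)/\sin d\) and the coefficient of \(y\) to \(\sin(td)/\sin d\), both nonnegative for \(t\in[0,1]\). Hence \(\gamma_{xy}(t)\in\cone\{x,y\}\) for every \(t\), so whenever \(K_{\cal C}\) is convex and \(x,y\in{\cal C}\subseteq K_{\cal C}\) the whole minimal arc lies in \(K_{\cal C}\).

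For the implication \((\Leftarrow)\) I would assume \(K_{\cal C}\) convex and pointed. Pointedness rules out antipodal pairs in \({\cal C}\): if \(x,-x\in{\cal C}\) then the line \(\R x\subseteq K_{\cal C}\), contradicting \(K_{\cal C}\cap(-K_{\cal C})\subseteq\{0\}\). Thus any \(x,y\in{\cal C}\) satisfy \(y\neq -x\), so there is a unique minimal geodesic segment, which by the previous paragraph lies in \(\cone\{x,y\}\subseteq K_{\cal C}\); being a unit vector, each \(\gamma_{xy}(t)\) then lies in \(K_{\cal C}\cap\SP^{n-1}={\cal C}\). Therefore \({\cal C}\) is spherically convex.

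For \((\Rightarrow)\) I would assume \({\cal C}\) spherically convex, recalling that it is a proper subset of \(\SP^{n-1}\). To get pointedness, a nonzero \(z\in K_{\cal C}\cap(-K_{\cal C})\) gives \(z=t_1x_1\) and \(-z=t_2x_2\) with \(x_i\in{\cal C}\) and \(t_i>0\); matching norms yields \(t_1=t_2\) and \(x_2=-x_1\), so \({\cal C}\) would contain an antipodal pair. But \(x\) and \(-x\) are joined by infinitely many minimal geodesics whose union is all of \(\SP^{n-1}\), so spherical convexity would force \({\cal C}=\SP^{n-1}\), contradicting properness. For convexity it suffices, since \(K_{\cal C}\) is a cone, to prove closure under addition: for \(z_i=t_ix_i\) with \(x_i\in{\cal C}\) and \(t_i>0\), the absence of antipodal points gives \(z_1+z_2\neq0\), its normalization lies on the minimal arc from \(x_1\) to \(x_2\) and hence in \({\cal C}\), so \(z_1+z_2\in K_{\cal C}\). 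The step I expect to be most delicate is the antipodal analysis underlying pointedness, because it is precisely there that the quantifier ``all minimal geodesic segments'' in Definition~\ref{def:cf} and the properness of \({\cal C}\) are indispensable.
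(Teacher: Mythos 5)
Your proof is correct, and it is essentially the canonical argument: the paper itself offers no proof of Proposition~\ref{pr:ccs}, deferring to \cite{FerreiraIusemNemeth2013}, and your dictionary --- minimal geodesic arcs are exactly the normalized nonnegative combinations of their endpoints, with pointedness corresponding (via properness of ${\cal C}$) to the absence of antipodal pairs --- is the same route taken there. Two small remarks: in the $(\Rightarrow)$ direction you assert, rather than prove, the converse inclusion that the normalization of $t_1x_1+t_2x_2$ lies on the minimal arc, which deserves one line (for $x_2\neq\pm x_1$ the ratio $\sin(td)/\sin((1-t)d)$ increases continuously from $0$ to $+\infty$ as $t$ runs over $(0,1)$, so some $\gamma_{x_1x_2}(t)$ is a positive multiple of $t_1x_1+t_2x_2$); and on $\SP^1$ there are exactly two, not infinitely many, minimal geodesics joining $x$ to $-x$, though their union still covers the sphere, which is the only fact you actually use.
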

Next we present a  generalization of the concept of convexity of a function on the sphere.
\begin{definition}\label{def:qcf-b}
Let \( {\cal C}\subset\SP^{n-1}\) be a spherically convex set and $I\subset \R$ be  an interval.  
A function \(f:{\cal C} \to \R\)  is said to be spherically quasi-convex (respectively, strictly spherically quasi-convex) 
if for any minimal geodesic  segment \(\gamma:I\to {\cal C}\), the composition 
\( f\circ \gamma :I\to \R\) is quasi-convex (respectively, strictly quasi-convex) in the usual sense, i.e., 
\(f(\gamma(t))\leq \max \{ f(\gamma(t_{1})), f(\gamma(t_{2}))\}\) for all \(t\in [t_{1}, t_{2}]\subset I\), (respectively,   \(f(\gamma(t))< \max \{ f(\gamma(t_{1})), f(\gamma(t_{2}))\}\)  for all \(t\in [t_{1}, t_{2}]\subset I\)).
\end{definition}
Naturally, from the above definition, it follows  that  spherically convex (respectively, strictly spherically convex) functions   are
spherically quasi-convex (respectively, strictly spherically quasi-convex), but the converse is not true; see \cite{FerreiraIusemNemeth2014}. 
\begin{proposition} \label{pr:charb1}
Let   \( {\cal C}\subset\SP^{n-1}\) be a spherically convex set.  A function  \(f:{\cal C} \to \R\) is   spherically quasi-convex if and only if   
the sub-level sets \([f\leq c]\) are   spherically convex  for all \(c\in \R\). 
\end{proposition}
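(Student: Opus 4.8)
The plan is to mirror the classical equivalence between quasi-convexity of a function and convexity of its sub-level sets, transporting it to the spherical setting through the geodesic characterization in Definition~\ref{def:qcf-b}. Both implications follow directly from the definitions, the only geometric inputs being that $\mathcal{C}$ spherically convex guarantees that every minimal geodesic joining two of its points stays in $\mathcal{C}$ (so that $f\circ\gamma$ is well defined), and that the restriction of a minimal geodesic segment to a subinterval is again a minimal geodesic segment.

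For necessity, I would fix $c\in\R$ and take two points $x,y\in[f\leq c]$. Since $\mathcal{C}$ is spherically convex, any minimal geodesic segment $\gamma:[t_1,t_2]\to\mathcal{C}$ with $\gamma(t_1)=x$ and $\gamma(t_2)=y$ is contained in $\mathcal{C}$, hence $f\circ\gamma$ is defined, and by spherical quasi-convexity it satisfies $f(\gamma(t))\leq\max\{f(x),f(y)\}\leq c$ for every $t\in[t_1,t_2]$. Thus the whole segment lies in $[f\leq c]$, and since $x,y$ and the segment were arbitrary, $[f\leq c]$ is spherically convex.

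For sufficiency, I would take an arbitrary minimal geodesic segment $\gamma:I\to\mathcal{C}$ and a subinterval $[t_1,t_2]\subset I$, and set $c:=\max\{f(\gamma(t_1)),f(\gamma(t_2))\}$, so that $\gamma(t_1),\gamma(t_2)\in[f\leq c]$. The key observation is that $\gamma|_{[t_1,t_2]}$ is still a minimal geodesic segment joining these two endpoints; the assumed spherical convexity of $[f\leq c]$ then forces $\gamma(t)\in[f\leq c]$, i.e.\ $f(\gamma(t))\leq\max\{f(\gamma(t_1)),f(\gamma(t_2))\}$, for all $t\in[t_1,t_2]$. This is precisely the quasi-convexity of $f\circ\gamma$, and as $\gamma$ was arbitrary, $f$ is spherically quasi-convex.

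I expect the only point genuinely requiring care to be this last direction: the definition of a spherically convex set quantifies over minimal geodesic segments joining two points, so one must justify that a subarc $\gamma|_{[t_1,t_2]}$ of a minimizing geodesic is itself minimizing before the hypothesis can be invoked. This is immediate on the sphere, since by Proposition~\ref{pr:ccs} a spherically convex set corresponds to a pointed convex cone and therefore contains no pair of antipodal points, so the connecting minimal geodesic is unique and every subarc of it is again minimal. The remaining steps are routine manipulations of the definitions.
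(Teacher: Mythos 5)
Your proof is correct and follows essentially the same route as the paper's: the necessity direction transports points of a sub-level set along the connecting minimal geodesic, and the sufficiency direction sets $c=\max\{f(\gamma(t_1)),f(\gamma(t_2))\}$ and invokes spherical convexity of $[f\leq c]$. Your added justification that a subarc $\gamma|_{[t_1,t_2]}$ is again a minimal geodesic segment (via Proposition~\ref{pr:ccs} excluding antipodal pairs) is a point the paper uses implicitly, so it is a welcome but inessential refinement.
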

\begin{proof}
Assume that   \(f\) is   spherically quasi-convex  and  \(c\in \R\). Take $x, y\in  [f\leq c]$ and $\gamma: [t_1, t_2]\to  \SP^{n-1}$ the  minimal  geodesic such that  \(\gamma(t_1)=x\) to \(\gamma(t_2)=y\),    see \eqref{eq:gctp} and \eqref{eq:gctpp}.  Since  \(f\) is a  spherically quasi-convex function and  $x, y\in  [f\leq c]$  we have
\(
f(\gamma(t))\leq \max \{ f(\gamma(t_{1})), f(\gamma(t_{2}))\}\leq c, 
\)
for all $t\in [t_{1}, t_{2}]$,  which implies that $\gamma(t)\in [f\leq c]$ for all \(t\in [t_{1}, t_{2}]\). Hence we conclude that $[f\leq c]$ is a spherically convex set,  for all \(c\in \R\).   Reciprocally,  we assume that \([f\leq c]\)  is    spherically convex set,  for all \(c\in \R\). Let \(\gamma:I\to {\cal C}\) be a minimal geodesic  segment. Take $t_{1}, t_{2} \in I$  and set $c= \max \{ f(\gamma(t_{1})), f(\gamma(t_{2}))\}$.  Since  \([f\leq c]\)  is    a spherically convex set,  we have $\gamma(t)\in [f\leq c]$ for all \(t\in [t_{1}, t_{2}]\), which implies 
\(
f(\gamma(t))\leq \max \{ f(\gamma(t_{1})), f(\gamma(t_{2}))\}, 
\)
for all $t\in [t_{1}, t_{2}]\subset I$. Therefore,  \(f\) is a  spherically quasi-convex function and the proof is concluded. 
\end{proof}

\begin{proposition} \label{pr:MinPro}
Let   \( {\cal C}\subset\SP^{n-1}\) be  spherically convex  and   \(f:{\cal C} \to \R\) be   spherically quasi-convex. If   $x^{*}\in {\cal C} $ is a strictly local minimum of $f$, then  $x^{*}$ is also a strictly  global minimum of $f$ in ${\cal C}$.
\end{proposition}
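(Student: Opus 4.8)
The plan is to argue by contradiction, exploiting quasi-convexity along the minimal geodesic joining $x^*$ to a hypothetical competitor. Suppose $x^*$ were \emph{not} a strict global minimum of $f$ in ${\cal C}$; then there would exist $y\in{\cal C}$ with $y\neq x^*$ and $f(y)\leq f(x^*)$. The first step is to secure a well-defined minimal geodesic from $x^*$ to $y$ lying inside ${\cal C}$. Since ${\cal C}$ is spherically convex, Proposition~\ref{pr:ccs} guarantees that the cone $K_{\cal C}$ is pointed, so ${\cal C}$ contains no pair of antipodal points and in particular $y\neq -x^*$. Formula~\eqref{eq:gctp} then supplies the unique minimal geodesic segment $\gamma:[0,1]\to\SP^{n-1}$ with $\gamma(0)=x^*$ and $\gamma(1)=y$, and spherical convexity of ${\cal C}$ (Definition~\ref{def:cf}) ensures $\gamma(t)\in{\cal C}$ for all $t\in[0,1]$.

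The second step is to apply the defining inequality of spherical quasi-convexity (Definition~\ref{def:qcf-b}) along $\gamma$, which yields
\[
f(\gamma(t))\leq\max\{f(\gamma(0)),\,f(\gamma(1))\}=\max\{f(x^*),\,f(y)\}=f(x^*),\qquad t\in[0,1].
\]
In particular $f(\gamma(t))\leq f(x^*)$ for every $t\in(0,1]$.

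The final step is to extract a contradiction from local strictness. By continuity of $\gamma$ together with $\gamma(0)=x^*$, for every prescribed $\delta>0$ the point $\gamma(t)$ lies in the spherical ball $B_{\delta}(x^*)$ whenever $t>0$ is small enough; moreover $\gamma$ is injective on $[0,1]$, so $\gamma(t)\neq x^*$ for all $t\in(0,1]$. Thus arbitrarily close to $x^*$ there are points of ${\cal C}$, distinct from $x^*$, at which $f$ does not exceed $f(x^*)$, contradicting the hypothesis that $x^*$ is a strict local minimum. Hence no such $y$ can exist and $x^*$ is a strict global minimum of $f$ in ${\cal C}$.

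The only point demanding genuine care is the well-definedness and placement of the connecting geodesic: one must rule out the antipodal case $y=-x^*$ (so that \eqref{eq:gctp} applies and the segment is unique) and confirm that the entire segment remains in ${\cal C}$. Both facts follow directly from pointedness of $K_{\cal C}$ and spherical convexity of ${\cal C}$, after which the argument is routine. Note also that one could alternatively phrase the same reasoning through Proposition~\ref{pr:charb1}, observing that the sub-level set $[f\le f(x^*)]$ is spherically convex and contains both $x^*$ and $y$; the direct geodesic computation above is however the most transparent route.
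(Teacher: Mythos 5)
Your proof is correct and follows essentially the same route as the paper's: contradiction via a hypothetical competitor $y$ with $f(y)\le f(x^*)$, quasi-convexity along the minimal geodesic joining $x^*$ to $y$, and points $\gamma(t)\in B_\delta(x^*)$ for small $t>0$ contradicting strict local minimality. Your added care in ruling out the antipodal case via pointedness of $K_{\cal C}$ and noting injectivity of $\gamma$ (so $\gamma(t)\neq x^*$) is slightly more thorough than the paper's version, which takes these points for granted, but the argument is the same.
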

\begin{proof}
If  $x^{*}$ is a strictly local minimum of $f$, then there exists a number $\delta >0$ such that 
\begin{equation} \label{eq:lmg}
f(x)>f(x^{*}), \qquad \forall ~ x\in B_{\delta}(x^*)=\{y\in {\cal C} ~:~ d(y, x^*)<\delta\}.
\end{equation}
Assume by contradiction that $x^{*}$ is not a strictly  global minimum of $f$ in ${\cal C}$. Thus, there exists ${\bar x}\in {\cal C} $ with
${\bar x}\neq x^*$ such that  $f({\bar x})\leq f(x^{*})$. Since $C$ is  spherically convex, we can   take  \(\gamma:I\to {\cal C}\) a  minimal
geodesic  segment joining   $x^*$ and ${\bar x}$, let's say, $\gamma(t_1)=x^*$ and  $\gamma(t_2)={\bar x}$ with $t_1,t_2\in I$. Considering that
\(f\) is   spherically quasi-convex we have \(f(\gamma(t))\leq  \max \{ f(x^*), f({\bar x})\}=f(x^{*})\) for all \(t\in [t_{1}, t_{2}]\). On
the other hand, for $t$ sufficiently close to $t_{1}$ we have $\gamma(t)\in B_{\delta}(x^*)$. Therefore, the last inequality  contradicts
\eqref{eq:lmg} and the proof is concluded. 
\end{proof}

\begin{proposition} \label{pr:MinProS}
Let   \( {\cal C}\subset\SP^{n-1}\) be a spherically convex set and   \(f:{\cal C} \to \R\) be a  spherically  quasi-convex function. If  $f$ is strictly quasi-convex then   $f$ has at most one local  minimum point which   is also a  global  minimum point of $f$. 
\end{proposition}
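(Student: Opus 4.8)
The plan is to reduce the statement to the one-dimensional behaviour of $f$ along minimal geodesics, just as in the proofs of Propositions~\ref{pr:charb1} and~\ref{pr:MinPro}. The first thing I would record is that a spherically convex set contains no pair of antipodal points: by Proposition~\ref{pr:ccs} the cone $K_{\mathcal C}$ is pointed, so if both $x$ and $-x$ lay in $\mathcal C$ the whole line $\R x$ would be contained in $K_{\mathcal C}$, violating pointedness. Hence any two distinct points of $\mathcal C$ are non-antipodal and are therefore joined by the \emph{unique} minimal geodesic segment \eqref{eq:gctp}, which by Definition~\ref{def:cf} lies entirely in $\mathcal C$. This is exactly what makes the geodesic comparisons below well defined.

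Next I would show that every local minimum of $f$ is automatically a global minimum, the strictly quasi-convex analogue of Proposition~\ref{pr:MinPro}. Let $x^*$ be a local minimum, so $f(x)\ge f(x^*)$ on some ball $B_{\delta}(x^*)$, and suppose toward a contradiction that there is $\bar x\in\mathcal C$ with $f(\bar x)<f(x^*)$; then $\bar x\neq x^*$, and $\bar x\neq -x^*$ by the previous paragraph. Joining $x^*=\gamma(t_1)$ to $\bar x=\gamma(t_2)$ by the unique minimal geodesic, strict spherical quasi-convexity gives $f(\gamma(t))<\max\{f(x^*),f(\bar x)\}=f(x^*)$ for every $t$ strictly between $t_1$ and $t_2$. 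Choosing $t$ close enough to $t_1$ forces $\gamma(t)\in B_{\delta}(x^*)$, where local minimality gives $f(\gamma(t))\ge f(x^*)$, a contradiction. Thus $x^*$ is a global minimum.

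Finally I would obtain uniqueness. If $x_1^*$ and $x_2^*$ are two distinct local minima, then by the previous step both are global minima, so $f(x_1^*)=f(x_2^*)=m$, the global minimum value. Joining them by the unique minimal geodesic $\gamma$ with $\gamma(t_1)=x_1^*$ and $\gamma(t_2)=x_2^*$, strict spherical quasi-convexity yields $f(\gamma(t))<\max\{m,m\}=m$ for $t$ strictly between $t_1$ and $t_2$, contradicting that $m$ is the global minimum value. Therefore $f$ has at most one local minimum, and whenever it exists it is the global minimum.

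I expect the only genuinely non-routine point to be the antipodal-point observation of the first paragraph: strict quasi-convexity is formulated along minimal geodesic segments, and the promotion of a (possibly non-strict) local minimum to a global and unique one hinges on having a single such segment between the relevant points. This uniqueness is guaranteed precisely by pointedness of $K_{\mathcal C}$ via Proposition~\ref{pr:ccs}; once it is in place, the remaining arguments are the same two-line geodesic comparison used throughout this section, with non-strict inequalities upgraded to strict ones.
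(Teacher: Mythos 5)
Your proof is correct and takes essentially the same geodesic-comparison approach as the paper: the paper first rules out two distinct local minima via strict quasi-convexity along the joining minimal geodesic and then gets globality by noting the local minimum is strict and invoking Proposition~\ref{pr:MinPro}, whereas you prove globality directly and then uniqueness --- a cosmetic reordering that, if anything, avoids the paper's unproved aside that ``the local minimum is strict''. One small remark: your antipodal-point preliminary is unnecessary, since the argument only needs the existence of \emph{some} minimal geodesic segment joining the two points inside $\mathcal{C}$ (guaranteed by Definition~\ref{def:cf} and spherical convexity), not its uniqueness.
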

\begin{proof}
Assume by contradiction that   $f$ has two local minimum  $x^*, {\bar x}\in {\cal C} $ with ${\bar x}\neq x^*$. Thus,   we can   take
\(\gamma:I\to {\cal C}\) a  minimal geodesic  segment joining   $x^*$ and ${\bar x}$, let's say, $\gamma(t_1)=x^*$ and  $\gamma(t_2)={\bar x}$
with $t_1,t_2\in I$. Due to  \(f\) being  strictly  spherically quasi-convex 
\(f(\gamma(t)) <  \max \{ f(x^*), f({\bar x})\}\) for all \(t\in [t_{1}, t_{2}]\).  Since we can take  $t$ sufficiently close to $t_{1}$ or
$t_{2}$, the last inequality contradict the assumption that    $x^*, {\bar x}$  are two distinct  local minima. Thus, $f$ has at most one local
minimum point.  Since  $f$ is strictly quasi-convex, the local minimum is strict. Therefore, Proposition~\ref{pr:MinPro} implies that  the  local
minimum point is   global and the proof is concluded. 
\end{proof}
\begin{proposition} \label{pr:CharDiff}
Let   \( {\cal C}\subset\SP^{n-1}\) be a open spherically convex set and   \(f:{\cal C} \to \R\) be a differentiable function. Then $f$ is spherically quasi-convex function if and only if
\begin{equation} \label{eq:cqqudf2}
f(x)\leq f(y) \Longrightarrow \langle Df(y) , x \rangle - \langle x , y \rangle\langle Df(y) , y \rangle \leq 0 , \qquad \forall ~ x,  y\in {\cal C} .
\end{equation}
\end{proposition}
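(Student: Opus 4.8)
The quantity on the right-hand side of the implication in \eqref{eq:cqqudf2} is nothing but $\langle \grad f(y), x\rangle$: indeed, by \eqref{eq:grad} one has $\grad f(y)=Df(y)-\langle Df(y),y\rangle y$, and since $\|y\|=1$ this gives $\langle \grad f(y),x\rangle=\langle Df(y),x\rangle-\langle x,y\rangle\langle Df(y),y\rangle$. Thus the proposition is the spherical analogue of the classical first-order characterization of quasi-convexity, and my plan is to reduce it to the one-variable version that I will use as a known fact: a differentiable function $g$ on an interval is quasi-convex if and only if $g(s)\le g(t)$ implies $g'(t)(s-t)\le0$. The bridge between the two is the composition $f\circ\gamma$ along minimal geodesics, whose derivative is given by \eqref{eq:cr1}.

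The computational heart of the argument is a single identity. Fix $x,y\in{\cal C}$ with $x\ne y$; since ${\cal C}$ is spherically convex, Proposition~\ref{pr:ccs} tells us that $K_{\cal C}$ is pointed, so $y\ne-x$ and the minimal geodesic $\gamma_{yx}$ from $y$ to $x$ is given by \eqref{eq:gctp} with the roles of $x$ and $y$ interchanged. Computing $\gamma_{yx}'(0)$ as a positive multiple of $x-\langle x,y\rangle y$ and inserting it into \eqref{eq:cr1}, I expect to obtain
\[
\frac{d}{dt}\Big|_{t=0} f(\gamma_{yx}(t)) = \frac{d(x,y)}{\sqrt{1-\langle x,y\rangle^2}}\Big(\langle Df(y),x\rangle-\langle x,y\rangle\langle Df(y),y\rangle\Big).
\]
Because $0<d(x,y)<\pi$ forces the scalar factor to be strictly positive, this derivative has the same sign as the left-hand side of the implication in \eqref{eq:cqqudf2}. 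This is the only genuine calculation in the proof, and carrying out the velocity computation correctly (together with checking the sign of that scalar factor) is the main potential obstacle.

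For necessity, assume $f$ is spherically quasi-convex and take $x,y\in{\cal C}$ with $f(x)\le f(y)$. If $x=y$ the left-hand side of \eqref{eq:cqqudf2} is $0$, so I may assume $x\ne y$. As ${\cal C}$ is open, I can extend $\gamma_{yx}$ slightly beyond its endpoints to a minimal geodesic defined on an open interval containing $[0,1]$ and still contained in ${\cal C}$, which guarantees that $g:=f\circ\gamma_{yx}$ is quasi-convex there and that $g'(0)$ is a genuine two-sided derivative. Applying the one-variable characterization with $s=1$ and $t=0$, where $g(1)=f(x)\le f(y)=g(0)$, yields $g'(0)\le0$, and the displayed identity then delivers exactly the desired inequality.

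For sufficiency, assume \eqref{eq:cqqudf2} and let $\gamma:I\to{\cal C}$ be an arbitrary minimal geodesic segment; I must show $g:=f\circ\gamma$ is quasi-convex. Take $t_1,t_2\in I$ with $g(t_1)\le g(t_2)$ and put $x=\gamma(t_1)$, $y=\gamma(t_2)$. The device that keeps the sign bookkeeping clean is to reparametrize: by uniqueness of the minimal geodesic, the restriction of $\gamma$ between $t_2$ and $t_1$ coincides with $\gamma_{yx}$ after the affine substitution $s\mapsto t_2+s(t_1-t_2)$, so that $g'(t_2)(t_1-t_2)$ equals the derivative at $s=0$ of $f\circ\gamma_{yx}$. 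By the displayed identity this is a positive multiple of $\langle Df(y),x\rangle-\langle x,y\rangle\langle Df(y),y\rangle$, which is $\le0$ by hypothesis \eqref{eq:cqqudf2} since $f(x)\le f(y)$. Hence $g'(t_2)(t_1-t_2)\le0$, so $g$ is quasi-convex by the one-variable characterization, and since $\gamma$ was arbitrary, $f$ is spherically quasi-convex. The remaining subtleties are purely organizational: excluding antipodal points through pointedness, and using openness of ${\cal C}$ to secure two-sided derivatives.
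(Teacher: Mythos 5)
Your proof is correct and follows essentially the same route as the paper's: both reduce the statement to the classical one-variable first-order characterization of quasi-convexity via \eqref{eq:cr1} and the explicit geodesic formula \eqref{eq:gctp}, the only difference being that you differentiate $f\circ\gamma_{yx}$ at $t=0$ (velocity proportional to $x-\langle x,y\rangle y$) while the paper differentiates $f\circ\gamma_{xy}$ at $t=1$ (velocity proportional to $\langle x,y\rangle y-x$), which is the same computation up to sign conventions. Your extra care with the corner cases ($x=y$, two-sided derivatives via openness of ${\cal C}$, and excluding $y=-x$ via pointedness of $K_{\cal C}$) makes explicit details the paper leaves implicit, but does not change the argument.
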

\begin{proof}
Let  \(\gamma:I\to {\cal C}\)  be a geodesic  segment and consider  the composition \( f\circ \gamma :I\to \R\).    The usual characterization of scalar quasi-convex functions implies that \( f\circ \gamma\) is quasi-convex  if and  only if 
\begin{equation} \label{eq:cqqud}
 f(\gamma(t_1)\leq f(\gamma(t_2)) \Longrightarrow  \frac{d}{dt}\left(f(\gamma(t_2))\right)(t_1-t_2)\leq 0, \qquad  \forall ~ t_2, t_1 \in I.
\end{equation}
On the other hand,  for each  $ x,  y\in {\cal C}$ with $y\neq x$  we have from  \eqref{eq:gctp} that  $\gamma_{xy}$ is the  minimal  geodesic segment from \(x=\gamma_{xy}(0)\) to \(y=\gamma_{xy}(1)\) and 
$$
\gamma_{xy}'(1)=\displaystyle \frac{\arccos \langle x, y\rangle}{\sqrt{1-\langle x, y\rangle ^2}} 
\left(yy^T-{\rm I_n}\right)x\in T_{y}\SP^{n-1} , \qquad y\neq -x.
$$
Note that letting $x=\gamma(t_1)$ and  $y=\gamma(t_2)$ we have that $\gamma_{xy}(t)=\gamma(t_1+t(t_2-t_1))$. 
Therefore, by using \eqref{eq:cr1}  we conclude that the  condition in \eqref{eq:cqqud} is equivalent  to \eqref{eq:cqqudf2}  and the  proof  of the proposition  follows. 
\end{proof}
\section{Spherically quasi-convex quadratic functions  on spherically convex sets} \label{sec:qcqfcs}
In this section our aim is  to  present  some conditions   characterizing   quadratic  quasi-convex functions on a general  spherically convex set. For that we need some definitions:  {\it From now on we assume that  ${\cal K}\subset \R^{n}$ is   a proper subdual cone,  ${\cal C}=\SP^{n-1}\cap\inte({\cal K})$ is an  open spherically convex set and  $A=A^T\in\R^{n\times n}$   with  the associated quadratic function  $q_A: {\cal C}\to\R$ defined by }
\begin{equation} \label{eq:QuadFunc}
q_A(x):=\langle Ax,x\rangle.
\end{equation}
We also need the  {\it restriction  on $\inte {\cal K} $ of the Rayleigh quotient function}  $\varphi_A :  \inte {\cal K} \to \R$  defined by 
\begin{equation} \label{eq:RayleighFunction}
\varphi_A(x):=\frac{\langle Ax, x \rangle}{\|x\|^2}.
\end{equation}
In the following propositions we present some  equivalent characterizations of the convexity of $q_A$ defined by \eqref{eq:QuadFunc} on
spherically convex sets. Our first result is the following proposition.
\begin{proposition}\label{pr:spher-quasiconv}
Let $q_A$ and $\varphi_A$ be the functions defined in  \eqref{eq:QuadFunc} and  \eqref{eq:RayleighFunction}, respectively.  The following statements  are equivalents: 
	\begin{enumerate}
		\item[(a)] The quadratic function $q_A$ is spherically quasi-convex;
		\item [(b)] $\langle Ax,y\rangle\leq\langle x,y\rangle\max \left\{q_A(x), ~q_A(y)\right\}$ for all $ x,y\in\SP^{n-1}\cap {\cal K}$;
		\item [(c)] $ \displaystyle \frac{\langle Ax,y\rangle}{\langle x,y\rangle}\leq\max\left\{\varphi_A(x), ~\varphi_A(y)\right\}, $ for all $x,y\in {\cal K}$ with $\langle x,y\rangle\ne 0$.
	\end{enumerate}
\end{proposition}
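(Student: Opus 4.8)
The plan is to run the argument through the first-order characterization of spherical quasi-convexity in Proposition~\ref{pr:CharDiff}, and then to transfer between the three domains ($\mathcal{C}$, $\SP^{n-1}\cap\mathcal{K}$ and $\mathcal{K}$) by homogeneity together with a density argument. First I would prove (a)$\Leftrightarrow$(b). Since $q_A(x)=\langle Ax,x\rangle$ is a quadratic polynomial and $A=A\tp$, its Euclidean gradient is $Dq_A(y)=2Ay$, whence $\langle Dq_A(y),x\rangle=2\langle Ax,y\rangle$ and $\langle Dq_A(y),y\rangle=2q_A(y)$. Substituting these into \eqref{eq:cqqudf2}, Proposition~\ref{pr:CharDiff} states that $q_A$ is spherically quasi-convex on $\mathcal{C}$ if and only if
\[
q_A(x)\le q_A(y)\ \Longrightarrow\ \langle Ax,y\rangle\le\langle x,y\rangle\, q_A(y),\qquad \forall\, x,y\in\mathcal{C}.
\]
When $q_A(x)\le q_A(y)$ we have $\max\{q_A(x),q_A(y)\}=q_A(y)$, so this implication is exactly the inequality in (b) for the pair $(x,y)$; conversely, given (b), considering the two possible orderings of $q_A(x),q_A(y)$ and using the symmetry $\langle Ax,y\rangle=\langle Ay,x\rangle$ recovers the implication for every pair. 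This identifies (a) with the restriction of (b) to $\mathcal{C}=\SP^{n-1}\cap\inte(\mathcal{K})$.

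To upgrade this to (b) as stated on the closed set $\SP^{n-1}\cap\mathcal{K}$, I would invoke continuity. The function $(x,y)\mapsto\langle Ax,y\rangle-\langle x,y\rangle\max\{q_A(x),q_A(y)\}$ is continuous on $\R^n\times\R^n$, and $\mathcal{C}$ is dense in $\SP^{n-1}\cap\mathcal{K}$ because $\inte(\mathcal{K})$ is dense in the proper cone $\mathcal{K}$ and radial normalization maps approximating interior points back onto the sphere. Hence the inequality, valid on $\mathcal{C}\times\mathcal{C}$, extends to all of $(\SP^{n-1}\cap\mathcal{K})^2$, yielding (b); the reverse direction is mere restriction.

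For (b)$\Leftrightarrow$(c) I would exploit homogeneity, noting that $\varphi_A$ is positively homogeneous of degree zero and coincides with $q_A$ on the sphere. Given $x,y\in\mathcal{K}$ with $\langle x,y\rangle\ne0$ (forcing $x,y\ne0$), put $\hat x=x/\|x\|$ and $\hat y=y/\|y\|$ in $\SP^{n-1}\cap\mathcal{K}$; then $\langle A\hat x,\hat y\rangle=\langle Ax,y\rangle/(\|x\|\|y\|)$, $\langle\hat x,\hat y\rangle=\langle x,y\rangle/(\|x\|\|y\|)$, $q_A(\hat x)=\varphi_A(x)$ and $q_A(\hat y)=\varphi_A(y)$. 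Applying (b) to $(\hat x,\hat y)$ and clearing the positive factor $\|x\|\|y\|$ gives $\langle Ax,y\rangle\le\langle x,y\rangle\max\{\varphi_A(x),\varphi_A(y)\}$; here I would use that subduality of $\mathcal{K}$ (i.e.\ $\mathcal{K}\subset\mathcal{K}^*$) forces $\langle x,y\rangle\ge0$ for all $x,y\in\mathcal{K}$, so that $\langle x,y\rangle\ne0$ means $\langle x,y\rangle>0$ and dividing produces (c) with the inequality in the correct direction. The converse runs the same computation backwards, the boundary case $\langle x,y\rangle=0$ of (b) being supplied by the density argument of the previous paragraph.

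The step I expect to be most delicate is the bookkeeping of domains and signs: matching the one-sided implication of Proposition~\ref{pr:CharDiff} to the symmetric $\max$-form in (b), and invoking subduality precisely at the division by $\langle x,y\rangle$ so that no inequality is inadvertently reversed. By contrast, the density/continuity passage, though routine, is exactly what legitimizes transferring the differential condition on the open set $\mathcal{C}$ to the closed statements (b) and (c).
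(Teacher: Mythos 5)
Your proposal is correct and follows essentially the same route as the paper's own proof: Proposition~\ref{pr:CharDiff} with $Dq_A(y)=2Ay$ gives (a)$\Leftrightarrow$(b) on ${\cal C}$, continuity together with density of ${\cal C}$ in $\SP^{n-1}\cap{\cal K}$ extends the inequality to the closed set, and normalization plus subduality (with the sequence/limit treatment of the case $\langle x,y\rangle=0$) yields (b)$\Leftrightarrow$(c). The only cosmetic difference is that your explicit case analysis on the ordering of $q_A(x),q_A(y)$ renders subduality unnecessary in the step (a)$\Leftrightarrow$(b), where the paper invokes it to pull $\langle x,y\rangle$ out of the maximum; the substance is identical.
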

\begin{proof}
 First of all, we  assume that item (a) holds.  Let $x,y\in {\cal C}$. Thus, either  $q_A(x)\leq q_A(y)$ or $q_A(y)\leq q_A(x)$. Hence,  by using Proposition~\ref{pr:CharDiff}  we conclude  that either $\langle Ay,x\rangle\leq\langle x,y\rangle q_A(y)$ or $\langle Ax,y\rangle\leq\langle x,y\rangle q_A(x)$. Thus,  since $A=A^T$ implies  $\langle Ax,y\rangle= \langle Ay,x\rangle$, taking into account that ${\cal K}$ is a  subdual cone, we have 
	 \[
	 \langle Ax,y\rangle\leq\max\{\langle x,y\rangle q_A(x),\langle x,y\rangle q_A(y)\}=\langle x,y\rangle\max\{q_A(x), q_A(y)\}, \qquad \forall ~  x,y\in {\cal C}. 
	 \]
Therefore, by continuity we extend the above inequality to all $ x,y\in\SP^{n-1}\cap {\cal K}$ and,  then item (b)  holds. Reciprocally, we assume that item (b) holds. Let $x,y\in {\cal C}$ satisfying   $q_A(x)\leq q_A(y)$. Then, by the inequality  in  item (b) and considering that ${\cal K}$ is a  subdual cone, we have
	$\langle Ax,y\rangle\leq\langle x,y\rangle q_A(y)$. Hence, by using  Proposition \ref{pr:CharDiff}  we conclude  that $f$ is spherically quasi-convex and the proof of  the equivalence between  (a) and (b) is complete.

To establish the equivalence between  (b) and (c), we assume first that  item (b) holds.  Let $x,y\in {\cal K}$ with $\langle x,y\rangle\ne 0$.  Then,  $x\ne 0$ and  $y\ne 0$. Moreover,  we  have  
$$
u:=\frac{x}{\|x\|} \in \SP^{n-1}\cap{\cal K},  \qquad v:=\frac{y}{\|y\|} \in \SP^{n-1}\cap {\cal K}.
$$ 
Hence, by using the inequality in item (b) with $x=u$ and $y=v$, we obtain the inequality in item (c).  Reciprocally, suppose that (c) holds.
Let $x,y\in \SP^{n-1}\cap {\cal K}$. First assume that  $\langle x,y\rangle\ne 0$. Since,   $\|x\|=\|y\|=1$, from the inequality in item (c)
we conclude that 
	$$
	\frac{\langle Ax,y\rangle}{\langle x,y\rangle}\leq\max\left\{ q_A(x), ~ q_A(y) \right\}. 
	$$
Due to ${\cal K}$  being  a  subdual cone,  $\langle x,y\rangle\geq 0$, and then the last inequality is equivalent to the inequality in   item
(b). Now, assume that $\langle x,y\rangle=0$. Then,  take  two  sequences  $\{x^k\}, \{y^k\}\subset  {\cal C}$ such that
$\lim_{k\to + \infty}x^k= x$ and $\lim_{k\to + \infty}y^k= y$.  Since    ${\cal K}$  is    a  subdual cone, we have $\langle x^k,y^k\rangle>0$
for all $k=1, 2, \ldots$. Therefore, considering that    $\|x^k\|=\|y^k\|=1$ for all $k=1, 2, \ldots$, we can apply again  the inequality in item
(c) to conclude 
$$
\langle A x^k,  y^k \rangle\leq\langle x^k, y^k\rangle\max \left\{q_A(x^k), ~ q_A(y^k)\right\}, \qquad k=1, 2, \ldots.
$$
By tending with $k$ to infinity, we  conclude  that the inequality in item (b) also holds for $\langle x,y\rangle=0$  and the proof of  the equivalence between  (b) and (c) is complete.
\end{proof}

\begin{corollary}\label{pr:qpcf}
Assume that ${\cal K}$ is  a  self-dual cone.  If the quadratic function $q_A$ is spherically quasi-convex, then $A$ has the $\cal K$-Z-property.
\end{corollary}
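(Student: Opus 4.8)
The plan is to derive the $\mathcal{K}$-Z-property directly from characterization (b) of Proposition~\ref{pr:spher-quasiconv}, exploiting that self-duality forces the second vector of any pair in $C(\mathcal{K})$ to lie in $\mathcal{K}$ itself, and that orthogonality annihilates the right-hand side of the inequality in (b).

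First I would unfold the definition of $C(\mathcal{K})$ under the self-duality hypothesis. Since $\mathcal{K}^*=\mathcal{K}$, a pair $(x,y)$ belongs to $C(\mathcal{K})$ precisely when $x\in\mathcal{K}$, $y\in\mathcal{K}$ and $\langle x,y\rangle=0$. I fix such a pair, the goal being to show $\langle Ax,y\rangle\le 0$. If $x=0$ or $y=0$ the claim is immediate, since then $\langle Ax,y\rangle=0$, so I may assume $x\neq 0$ and $y\neq 0$.

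Next I would normalize. Setting $u:=x/\|x\|$ and $v:=y/\|y\|$, both $u$ and $v$ lie in $\SP^{n-1}\cap\mathcal{K}$, and $\langle u,v\rangle=\langle x,y\rangle/(\|x\|\,\|y\|)=0$. Since $q_A$ is assumed spherically quasi-convex, Proposition~\ref{pr:spher-quasiconv} supplies characterization (b), which applies to every pair of unit vectors in $\mathcal{K}$, including orthogonal ones. Applying it to $u$ and $v$ yields $\langle Au,v\rangle\le\langle u,v\rangle\max\{q_A(u),q_A(v)\}=0$, the right-hand side vanishing precisely because $\langle u,v\rangle=0$. Rescaling through the bilinearity of $\langle A\cdot,\cdot\rangle$ then gives $\langle Ax,y\rangle=\|x\|\,\|y\|\,\langle Au,v\rangle\le 0$, which is exactly the $\mathcal{K}$-Z-property.

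There is no genuinely hard step here: the whole argument is a short reduction to item (b). The only points that require a moment's care are, first, confirming that self-duality is exactly what places $y$ — a priori only in $\mathcal{K}^*$ — back inside $\mathcal{K}$, so that the normalized $v$ is an admissible argument in (b); and second, noting that item (b) is already stated for all unit vectors in $\mathcal{K}$, in particular for orthogonal pairs, where it was obtained in the proof of Proposition~\ref{pr:spher-quasiconv} via a limiting argument. Consequently the orthogonal case needs no separate treatment, and the corollary follows at once.
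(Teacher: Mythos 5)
Your proof is correct and follows essentially the same route as the paper: both arguments dispose of the trivial case $x=0$ or $y=0$, use self-duality to place the normalized vectors $x/\|x\|$ and $y/\|y\|$ in $\SP^{n-1}\cap\mathcal{K}$, and then invoke the implication (a)$\Rightarrow$(b) of Proposition~\ref{pr:spher-quasiconv} with $\langle x,y\rangle=0$ to conclude $\langle Ax,y\rangle\le 0$. Your added remark that item (b) covers orthogonal pairs via the limiting argument in the proposition's proof is a correct and welcome clarification, but does not change the substance.
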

\begin{proof}
Let $x,y \in\R^n\times\R^n$  such that $x\in {\cal K}$, $y\in {\cal K}^*$ and $\langle x, y \rangle=0$. If $x=0$ or $y=0$ we  have  $\langle
Ax,y\rangle=0$. Thus, assume that  $x\neq 0$ and  $y\neq 0$. Considering that  ${\cal K}$ is  self-dual we have  $
x/\|x\|,~y/\|y\|\in\SP^{n-1}\cap {\cal K}$. Thus,   since   $q_A$ is spherically quasi-convex and   $\langle x/\|x\|, ~y/\|y\| \rangle=0$, we
obtain, from  items (a) and (b) of Proposition~\ref{pr:spher-quasiconv}, that  $\langle Ax,y\rangle\leq 0$. Therefore, $A$ has the $\cal K$-Z-property and 
the proof is concluded.
\end{proof}

\begin{theorem}\label{th:quasiconv-iff}
The function  $q_A$  defined in  \eqref{eq:QuadFunc} is spherically quasi-convex if and only if  $\varphi_A$  defined in  \eqref{eq:RayleighFunction} is quasi-convex.
\end{theorem}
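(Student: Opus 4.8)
The plan is to avoid the differentiable characterizations (Proposition~\ref{pr:CharDiff} and item (c) of Proposition~\ref{pr:spher-quasiconv}) altogether and instead route both sides of the equivalence through their sublevel sets and the cones these span. The point is that spherical quasi-convexity of $q_A$ is controlled by spherical convexity of its sublevel sets (Proposition~\ref{pr:charb1}), which Proposition~\ref{pr:ccs} translates into convexity and pointedness of the cones they generate; if I can show that those cones are convex exactly when the sublevel sets of $\varphi_A$ are convex, the theorem collapses to the standard ``sublevel sets convex $\Leftrightarrow$ quasi-convex'' characterization applied to $\varphi_A$ on the convex set $\inte{\cal K}$.

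The single fact driving everything is that $\varphi_A$ is positively homogeneous of degree zero, since $\varphi_A(tx)=\langle A(tx),tx\rangle/\|tx\|^2=\varphi_A(x)$ for every $t>0$; in particular $\varphi_A$ coincides with $q_A$ on ${\cal C}=\SP^{n-1}\cap\inte{\cal K}$, and each sublevel set $[\varphi_A\le c]\subseteq\inte{\cal K}$ is invariant under multiplication by positive scalars. Writing $L_c:=[q_A\le c]\subseteq{\cal C}$ for the spherical sublevel set and using \eqref{eq:pccone}, I would then establish the set-theoretic dictionary $[\varphi_A\le c]=K_{L_c}\cap\inte{\cal K}$ together with $K_{L_c}=[\varphi_A\le c]\cup\{0\}$ (in the nontrivial case $L_c\neq\emptyset$), using that $0\notin\inte{\cal K}$ because ${\cal K}$ is pointed, and that every $x\in\inte{\cal K}$ is the positive multiple $\|x\|\,(x/\|x\|)$ of the point $x/\|x\|\in{\cal C}$. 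I would also record that $K_{L_c}\subseteq K_{\cal C}\subseteq{\cal K}$ is automatically pointed, since any subset of the pointed cone ${\cal K}$ is pointed. The crux is the equivalence ``$K_{L_c}$ convex $\Leftrightarrow$ $[\varphi_A\le c]$ convex'': one direction is immediate because $[\varphi_A\le c]=K_{L_c}\cap\inte{\cal K}$ is an intersection of convex sets, while the other uses scaling invariance of $[\varphi_A\le c]$ to adjoin the apex $0$ without destroying convexity.

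With the dictionary in hand the proof is a chain of known facts: by Proposition~\ref{pr:charb1}, $q_A$ is spherically quasi-convex iff every $L_c$ is spherically convex; by Proposition~\ref{pr:ccs} this holds iff every cone $K_{L_c}$ is convex and pointed; since pointedness is automatic and $K_{L_c}$ is convex exactly when $[\varphi_A\le c]$ is convex, this is equivalent to all sublevel sets of $\varphi_A$ being convex; and the latter is precisely quasi-convexity of $\varphi_A$ on the convex set $\inte{\cal K}$. The step I expect to demand the most care is the apex bookkeeping in the cone-convexity equivalence: one must dispose of degenerate sublevel sets (empty, or equal to all of ${\cal C}$) and verify that adjoining $0$ to the scaling-invariant convex set $[\varphi_A\le c]$ really yields a convex cone, which is exactly where the degree-zero homogeneity of $\varphi_A$ is indispensable.
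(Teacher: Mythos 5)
Your proposal is correct and takes essentially the same route as the paper's own proof: both reduce the theorem to Proposition~\ref{pr:charb1} and Proposition~\ref{pr:ccs} via the identification of the cone spanned by $[q_A\leq c]$ with the sublevel set $[\varphi_A\leq c]$ (the paper's \eqref{eq:eqmain}), driven by the degree-zero positive homogeneity of $\varphi_A$. Your only deviation is the explicit apex bookkeeping --- noting that strictly one has $K_{[q_A\leq c]}=[\varphi_A\leq c]\cup\{0\}$ when nonempty, that $0\notin\inte({\cal K})$, and that adjoining $0$ to the scaling-invariant convex set $[\varphi_A\leq c]$ preserves convexity --- which is a slightly more careful rendering of the identical argument rather than a different one.
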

\begin{proof}
Let   $[q_A\leq c]:=\left\{y\in {\cal C}~:~q_A(x)\leq c\right\}$ and $ [\varphi_A\leq c]:=\{x\in\inte(\mathcal K):\varphi_A(x)\leq c\}$  be the
sublevel sets of  $q_A$ and $\varphi_A$, respectively, where $c\in \R$.  Let ${\cal K}_{[q_A\leq c]}$ be   the cone spanned by $[q_A \leq c]$.
Since ${\cal C}=\SP^{n-1}\cap\inte({\cal K})$,  we conclude that  $x\in \inte \mathcal K $  if and only if $x/\|x\| \in {\cal C}$. Hence, the
definitions of $[q_A\leq c]$ and  $ [\varphi_A\leq c]$  imply  that  
\begin{equation}\label{eq:eqmain}
{\cal K}_{[q_A\leq c]}= [\varphi_A\leq c]. 
\end{equation} 
Now,  we assume that  $q_A$ is spherically quasi-convex. Thus, from Poposition~\ref{pr:charb1} we conclude that  $[q_A\leq c]$ is  spherically convex for all $c\in \R$.  Hence,  it follows from Proposition~\ref{pr:ccs} that the cone ${\cal K}_{[q_A\leq c]}$ is convex and pointed, which implies from \eqref{eq:eqmain}  that $[\varphi_A\leq c]$
 is convex for all $c\in \R$.  Therefore, $\varphi_A$ is quasi-convex.  Reciprocally, assume that $\varphi_A$ is quasi-convex. Thus,
 $[\varphi_A\leq c]$ is convex for all $c\in \R$. On the other hand, since ${\cal K}$  is  a proper subdual cone,    $\inte \mathcal K$ is
 pointed.  Thus, considering that  $[\varphi_A\leq c]\subset \inte \mathcal K$ is a cone,  it  is also  a  pointed cone. Hence, from
 \eqref{eq:eqmain}  it follows that ${\cal K}_{[q_A\leqq c]}$ is  a  pointed convex cone. Hence,  Proposition~\ref{pr:ccs} implies that $[q_A\leq c]$ is
 spherically convex for all $c\in \R$. Therefore, by using Proposition~\ref{pr:charb1}, we conclude that $q_A$ is spherically quasi-convex and the
 proof is completed. 
\end{proof}
\begin{corollary}\label{cor:cor}
The function  $q_A$  defined in  \eqref{eq:QuadFunc} is spherically quasi-convex  if and only if  the cone 
\begin{equation}\label{eq:closure}
\{x\in{\cal K}:\langle A_{c}x,x\rangle\leq 0\}, 
\end{equation}
is convex for any $c\in\R$, where $A_{c}:=A-c{\rm I_n}$.
\end{corollary}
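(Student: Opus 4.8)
The plan is to reduce the statement to Theorem~\ref{th:quasiconv-iff} through the sub-level-set characterization of quasi-convexity. First I would record the elementary identity that, since every $x\in\inte({\cal K})$ has $\|x\|^2>0$,
\[
[\varphi_A\leq c]=\{x\in\inte({\cal K}):\langle A_c x,x\rangle\leq 0\},\qquad A_c:=A-c{\rm I_n},
\]
because $\langle Ax,x\rangle/\|x\|^2\leq c$ is equivalent to $\langle(A-c{\rm I_n})x,x\rangle\leq 0$. Thus the cone in \eqref{eq:closure}, which I denote $S_c$, is exactly the sub-level set $[\varphi_A\leq c]$ but with $\inte({\cal K})$ enlarged to its closure ${\cal K}$; note that $S_c$ is a closed cone, being the intersection of the closed set ${\cal K}$ with the closed set $\{x:\langle A_c x,x\rangle\leq 0\}$. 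By the standard characterization of quasi-convexity through convexity of sub-level sets, together with Theorem~\ref{th:quasiconv-iff}, $q_A$ is spherically quasi-convex if and only if every $[\varphi_A\leq c]$ is convex, so it remains only to compare convexity of the sets $[\varphi_A\leq c]$ with that of the sets $S_c$.

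For the easy implication I would suppose that every $S_c$ is convex; then $[\varphi_A\leq c]=S_c\cap\inte({\cal K})$ is an intersection of two convex sets, hence convex for each $c$, and Theorem~\ref{th:quasiconv-iff} yields that $q_A$ is spherically quasi-convex.

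The content is in the converse, for which the key is the pair of inclusions
\[
S_c\subseteq\overline{[\varphi_A\leq c']}\quad(c'>c),\qquad \bigcap_{c'>c}\overline{[\varphi_A\leq c']}\subseteq S_c .
\]
To prove the first I would take a nonzero $x\in S_c$ and observe that $\langle A_{c'}x,x\rangle=\langle A_c x,x\rangle-(c'-c)\|x\|^2<0$ strictly; since $\inte({\cal K})$ is dense in ${\cal K}$, choosing $x^k\in\inte({\cal K})$ with $x^k\to x$ makes $\langle A_{c'}x^k,x^k\rangle<0$ eventually, so $x^k\in[\varphi_A\leq c']$ and hence $x\in\overline{[\varphi_A\leq c']}$. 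For the second I would take $z$ in the intersection, pass to the limit in $\langle A_{c'}z^k,z^k\rangle\leq 0$ to get $\langle Az,z\rangle\leq c'\|z\|^2$ for all $c'>c$, and let $c'\downarrow c$ to obtain $z\in S_c$ (with $z\in{\cal K}$ because $\overline{[\varphi_A\leq c']}\subseteq{\cal K}$). Granting these inclusions, if $q_A$ is spherically quasi-convex then each $[\varphi_A\leq c']$ is convex, so each closure $\overline{[\varphi_A\leq c']}$ is convex; for $x,y\in S_c$ the first inclusion and this convexity place the whole segment in every $\overline{[\varphi_A\leq c']}$, and the second inclusion then puts the segment in $S_c$, proving $S_c$ convex. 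Segments with an endpoint at the origin lie in $S_c$ immediately by the cone property, so no nonemptiness hypothesis on the sub-level sets is needed.

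The step I expect to be the main obstacle is the first inclusion. The defining inequality $\langle A_c x,x\rangle\leq 0$ is nonstrict, and on its boundary $\langle A_c x,x\rangle=0$ a boundary point of ${\cal K}$ cannot in general be approached from within $\inte({\cal K})$ while keeping the quadratic form nonpositive. The device that repairs this is to pass to a strictly larger level $c'>c$, which turns the inequality strict and so makes the density-plus-continuity argument go through; recognizing that one must therefore intersect the closures over all $c'>c$, rather than argue at the single level $c$, is the essential point, while the remaining inclusion and the easy implication are routine limiting and intersection arguments.
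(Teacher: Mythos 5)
Your proof is correct, but it follows a genuinely different route from the paper's, and the difference matters. The paper disposes of both directions at once through the set identity \eqref{eq:closure_equiv}, asserted without justification, which equates the cone \eqref{eq:closure} (your $S_c$) with $\textrm{closure}\left([\varphi_A\leq c]\right)$ at the \emph{same} level $c$; you instead use the exact identity $[\varphi_A\leq c]=S_c\cap\inte({\cal K})$ for the easy direction and, for the hard one, the sandwich $S_c\setminus\{0\}\subseteq\overline{[\varphi_A\leq c']}$ for every $c'>c$ together with $\bigcap_{c'>c}\overline{[\varphi_A\leq c']}\subseteq S_c$, obtained by letting $c'\downarrow c$. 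The boundary obstruction you flag is not hypothetical: the identity \eqref{eq:closure_equiv} is false in general, and it can fail even when $q_A$ \emph{is} spherically quasi-convex, i.e.\ under the very hypothesis of the forward implication. For ${\cal K}=\R^2_+$ and $A=\diag(1,0)$, the function $q_A(x)=x_1^2$ is monotone along the arc ${\cal C}$, hence spherically quasi-convex, yet at $c=0$ the left-hand side of \eqref{eq:closure_equiv} is the ray $\{0\}\times\R_+$ while $[\varphi_A\leq 0]=\emptyset$, so the right-hand side is empty; for $n\geq 3$ the paper's own Householder example with $v=e^1+e^2$ shows the same failure at $c=-1$, where by Cauchy--Schwarz the cone \eqref{eq:closure} is the boundary ray $\{tv:t\geq 0\}$ but $[\varphi_H\leq -1]$ is empty. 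Thus the paper's one-line argument has a genuine gap, which your level-shifting device closes: passing to $c'>c$ makes the inequality $\langle A_{c'}x,x\rangle<0$ strict, so density of $\inte({\cal K})$ in ${\cal K}$ plus continuity applies, and both proofs then reduce to Theorem~\ref{th:quasiconv-iff} in the same way. What the paper's route buys is brevity in the cases where its closure identity does hold; what yours buys is a complete proof of the stated equivalence. One small point of hygiene: your displayed inclusion $S_c\subseteq\overline{[\varphi_A\leq c']}$ can fail at the origin when the sublevel sets are empty, but since you prove it only for nonzero $x$ and treat segments with an endpoint at $0$ by the cone property, this affects only the display, not the argument.
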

\begin{proof}
Assume that $q_A$ is spherically quasi-convex. Hence   Theorem~\ref{th:quasiconv-iff} implies  that $\varphi_A$ is quasi-convex and then $[\varphi_A\leq c]$ is convex for any $c\in\R$. Since
	\begin{equation}\label{eq:closure_equiv}
		\{x\in{\cal K}~:~\langle A_{c}x,x\rangle\leq 0\}=\textrm{closure}(\{x\in\inte({\cal K})~:~\langle A_{c}x,x\rangle\leq 0\})
		=\textrm{closure}\left([\varphi_A\leq c]\right),
	\end{equation}
	where ``closure'' is the topological closure operator of a set, we conclude that the set in  \eqref{eq:closure} is convex. Reciprocally, assume that the set in  \eqref{eq:closure} is convex for any $c\in\R$. Thus, the equality in \eqref{eq:closure_equiv} implies that the set $\textrm{closure}([\varphi_A\leq c])$ is convex for any $c\in\R$. Hence,  $[\varphi_A\leq c]$ is convex  for any $c\in\R$ and then $\varphi_A$ is quasi-convex. Therefore, Theorem~\ref{th:quasiconv-iff} implies  that $q_A$ is spherically quasi-convex.
\end{proof}
\subsection{Spherically quasi-convex quadratic functions on the spherical positive orthant} \label{sec:qcqfpo}

In this section we present some  properties of a quadratic  function defined in the  spherical positive orthant, which corresponds to ${\cal
K}=\R^n_+$.  We know that if $A$ has only one
eigenvalue, then  $q_A$ is constant and, consequently,    it is   spherically quasi-convex. Therefore,  {\it  throughout  this section we  assume
that $A$  has at least two distinct  eigenvalues. The domains  ${\cal C}$  and $\inte({\cal K})$ of  $q_A$  and  $\varphi_A$,
respectively are given by  
 \begin{equation} \label{eq:KR++}
{\cal C}:=\SP^n\cap\R^n_{++}, \qquad \inte({\cal K}):=\R^n_{++},
\end{equation}
We remind that   $q_A$ and $\varphi_A$  are defined in  \eqref{eq:QuadFunc} and \eqref{eq:RayleighFunction}, respectively}. 
 Next we   present  a technical lemma which will be useful in the sequel. 
\begin{lemma}\label{Lem:Basic}
Let $n\geq  2$ and  $V=[v^1~ v^2~v^3~\cdots ~v^n] \in\R^{n\times n}$ be an orthogonal matrix, $A=V^\top\Lambda V$ and  $\Lambda=\diag(\lambda_1, \ldots, \lambda_n)$.  Assume that $\lambda_1 < \lambda_2 \leq  \ldots \leq \lambda_n$. If  $ v^1 \in \R^n_{+}$ and  $c\notin [\lambda_2,\lambda_n)$ then  the sublevel set  $[\varphi_A\leq c]$ is convex.
\end{lemma}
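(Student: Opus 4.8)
The plan is to work in the coordinates of the eigenbasis of $A$ and to reduce the defining inequality of the sublevel set to the sublevel set of a convex function. First I would observe that, since $\|x\|^2>0$ on $\R^n_{++}$, the inequality $\varphi_A(x)\le c$ is equivalent to $\langle A_c x,x\rangle\le 0$ with $A_c=A-c{\rm I_n}$, so that $[\varphi_A\le c]=\{x\in\R^n_{++}:\langle A_c x,x\rangle\le 0\}$. Writing $x$ in the orthonormal basis $v^1,\dots,v^n$ and using $A=V^\top\Lambda V$ puts the quadratic form into diagonal shape,
\[
\langle A_c x,x\rangle=\sum_{i=1}^n(\lambda_i-c)\,\langle v^i,x\rangle^2 .
\]

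Next I would dispose of the two extreme ranges of $c$ covered by the hypothesis $c\notin[\lambda_2,\lambda_n)$. If $c\ge\lambda_n$ then every coefficient $\lambda_i-c\le 0$, so $\langle A_c x,x\rangle\le 0$ holds for all $x$ and $[\varphi_A\le c]=\R^n_{++}$, which is convex. If $c<\lambda_1$ then every coefficient $\lambda_i-c>0$, so $\langle A_c x,x\rangle>0$ for all $x\ne 0$ and $[\varphi_A\le c]=\varnothing$, again convex. This leaves the essential range $\lambda_1\le c<\lambda_2$, in which exactly the first coefficient satisfies $\lambda_1-c\le 0$ while all the others obey $\lambda_i-c\ge\lambda_2-c>0$ for $i\ge 2$.

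For this range the key step is the sign observation furnished by the hypothesis $v^1\in\R^n_+$: being a unit vector, $v^1$ is nonzero and nonnegative, so for every $x\in\R^n_{++}$ one has $\langle v^1,x\rangle=\sum_{j=1}^n v^1_j x_j>0$. Introducing the convex function $g(x):=\bigl(\sum_{i\ge 2}(\lambda_i-c)\langle v^i,x\rangle^2\bigr)^{1/2}$, which is a Euclidean norm of a linear image of $x$ (the weights $\lambda_i-c$ being positive), the inequality $\langle A_c x,x\rangle\le 0$ reads $g(x)^2\le(c-\lambda_1)\langle v^1,x\rangle^2$; taking square roots and using $\langle v^1,x\rangle>0$ turns this into $g(x)\le\sqrt{c-\lambda_1}\,\langle v^1,x\rangle$. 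Hence $[\varphi_A\le c]$ equals $\R^n_{++}\cap\{x:h(x)\le 0\}$, where $h(x):=g(x)-\sqrt{c-\lambda_1}\,\langle v^1,x\rangle$ is convex, so it is an intersection of two convex sets and therefore convex. When $c=\lambda_1$ the same formula degenerates to $\{g\le 0\}\cap\R^n_{++}$, which is still convex.

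The main obstacle is precisely this last case. The set $\{x:\langle A_c x,x\rangle\le 0\}$ is in general a double elliptic cone, the union of two nappes, and is not convex; the whole point is that the positivity of $v^1$ together with $x\in\R^n_{++}$ forces $\langle v^1,x\rangle>0$, so that the orthant meets only one nappe, and that the square-root step legitimately collapses the quadratic inequality to the convex inequality $g\le\sqrt{c-\lambda_1}\,\langle v^1,\cdot\rangle$. I would take care to justify that $\langle v^1,x\rangle>0$ is \emph{strict} (so that on $\R^n_{++}$ one never straddles the boundary between the two nappes) and that the passage to square roots is an equivalence rather than a one-way implication, since that is what allows the replacement of $|\langle v^1,x\rangle|$ by $\langle v^1,x\rangle$.
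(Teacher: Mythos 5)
Your proof is correct and takes essentially the same route as the paper's: diagonalize the form so that $\varphi_A(x)\le c$ becomes $\sum_i(\lambda_i-c)\langle v^i,x\rangle^2\le 0$, dispose of $c<\lambda_1$ and $c\ge\lambda_n$ immediately, and for $\lambda_1\le c<\lambda_2$ identify the sublevel set as the intersection of $\R^n_{++}$ with a single nappe of the elliptic cone --- the paper realizes this via the change of variables $y=V^\top x$ and the convex cone $\{y: y_1\ge\sqrt{\theta_2y_2^2+\cdots+\theta_ny_n^2}\}$, while you work directly in $x$-coordinates with the convex function $g(x)-\sqrt{c-\lambda_1}\,\langle v^1,x\rangle$, a cosmetic rather than substantive difference. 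If anything, your write-up is slightly more careful at two points: for $c<\lambda_1$ you correctly get the empty set (the paper writes $\{0\}$, though $0\notin\R^n_{++}$), and you derive $\langle v^1,x\rangle>0$ from $v^1\in\R^n_+\setminus\{0\}$ alone, whereas the paper informally invokes $v^1\in\R^n_{++}$.
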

\begin{proof}
 By using that $VV^\top={\rm I_n}$ and $A=V^\top\Lambda V$  we obtain  from the definition \eqref{eq:RayleighFunction} that 
\begin{equation}\label{eq:bl1}
[\varphi_A\leq c]=\left\{x\in\R^n_{++}:~ \sum_{i=1}^n(\lambda_i-c)\langle v^i, x\rangle^2  \leq 0\right\}.
\end{equation}
We will show that $[\varphi_A\leq c]$ is convex,  for all $c\notin [\lambda_2,\lambda_n)$. If $c<\lambda_1$, then owing  that $v^1, v^2, \ldots ,
v^n$ are linearly independent, we conclude from \eqref{eq:bl1} that $[\varphi_A\leq c]=\{0\}$ and therefore it is convex.  If $c=\lambda_1$, then
from \eqref{eq:bl1} we conclude that $[\varphi_A\leq c]={\cal S}\cap \R^n_{++},$ where ${\cal S}: =\{x\in\R^n ~: \langle v^2, x\rangle=0,
~\ldots,~  \langle v^n, x\rangle=0\}$, and hence $[\varphi_A\leq c]$ is convex.  Assume that $\lambda_1<c<\lambda_2$.  By letting $y=V^\top x$, i.e., $y_i=\langle
v^i, x\rangle$, for $i=1, \ldots, n$,  and owing that  $v^1\in {\mathbb R^n_{++}}$ and $x\in\R^n_{++}$, we have $y_1>0$  and from \eqref{eq:bl1}
we obtain  that  $[\varphi_A\leq c]= {\cal L}\cap V\tp\R^n_{++}$, where  
$$
	{\cal L}:=\left\{y=(y_1,   \ldots,  y_n)\in\R^n:~y_1\geq  \sqrt{\theta_2y_2^2+ \ldots + \theta_{n}y_n^2}\right\},   \quad \theta_i=\frac{\lambda_i-c}{c-\lambda_1}, \qquad i=2, \ldots, n.
$$
Since   ${\cal L}$ and $V\tp\R^n_{++}$ are convex sets, we conclude that  $[\varphi_A\leq c]$ is  convex.   If $c\ge \lambda_n$, then
$[\varphi_A\leq c]=\R^n_{++}$ is convex, which concludes the proof.
\end{proof}
\begin{lemma} \label{lem:Copositive}
 Let  $\lambda $ be an  eigenvalues of $A$.  If  $\lambda  I_n-A$ is copositive and $\lambda\leq c$,   then  \[[\varphi_A\leq c]=\R^n_{++}\] and
 consequently it is  a   convex set.
\end{lemma}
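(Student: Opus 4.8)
The plan is to unwind the definition of copositivity and then simply divide by the squared norm. Recall that by its very definition the sublevel set $[\varphi_A\leq c]$ is contained in $\inte({\cal K})=\R^n_{++}$, so it suffices to establish the reverse inclusion $\R^n_{++}\subseteq[\varphi_A\leq c]$; that is, I only need to show that every $x\in\R^n_{++}$ satisfies $\varphi_A(x)\leq c$.

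First I would translate the copositivity hypothesis into a quadratic inequality. Since $\lambda{\rm I_n}-A$ is copositive, the definition of copositivity gives $\langle(\lambda{\rm I_n}-A)x,x\rangle\geq 0$ for all $x\in\R^n_+$, and expanding $\langle(\lambda{\rm I_n}-A)x,x\rangle=\lambda\|x\|^2-\langle Ax,x\rangle$ shows this is equivalent to
\[
\langle Ax,x\rangle\leq\lambda\|x\|^2,\qquad\forall~x\in\R^n_+.
\]
Then, for any $x\in\R^n_{++}\subset\R^n_+$ we have $x\neq 0$, hence $\|x\|^2>0$, and dividing the previous inequality by $\|x\|^2$ yields, directly from the definition \eqref{eq:RayleighFunction},
\[
\varphi_A(x)=\frac{\langle Ax,x\rangle}{\|x\|^2}\leq\lambda\leq c.
\]
Thus every point of $\R^n_{++}$ lies in $[\varphi_A\leq c]$, which together with the reverse inclusion gives $[\varphi_A\leq c]=\R^n_{++}$; and $\R^n_{++}$ is convex, so the claimed consequence follows.

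I do not expect any genuine obstacle here: the entire content of the argument is the elementary estimate $\langle Ax,x\rangle\leq\lambda\|x\|^2$ on the nonnegative orthant, which is exactly the copositivity of $\lambda{\rm I_n}-A$ restated. The only point worth flagging is that the hypothesis ``$\lambda$ is an eigenvalue of $A$'' plays no role in proving the equality itself; it enters only through the standing assumptions of the section (and is the natural source for such a $\lambda$ making $\lambda{\rm I_n}-A$ copositive), so one should make sure the $\lambda\leq c$ and copositivity assumptions are the ones actually invoked.
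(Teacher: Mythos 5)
Your proof is correct and follows essentially the same route as the paper: both arguments reduce to the chain $\langle Ax,x\rangle\leq\lambda\|x\|^2\leq c\|x\|^2$ on the positive orthant, obtained directly from the copositivity of $\lambda{\rm I_n}-A$ and $\lambda\leq c$. Your side remark is also accurate — the paper's proof likewise never uses that $\lambda$ is an eigenvalue of $A$; that hypothesis only matters for how the lemma is invoked later (e.g.\ in Theorem~\ref{Thm:Copositive} with $\lambda=\lambda_2$).
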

\begin{proof}
Let $c\in \R$ and   $[\varphi_A\leq c]:=\{x\in  \R^n_{++}:~   \langle A x,x \rangle-c \|x\|^2\leq 0\}$.  Since $\lambda\leq c$ and $\lambda  I_n-A$ is copositive, we have  $\langle A x,x \rangle-c\|x\|^2\leq  \langle A x,x \rangle-\lambda \|x\|^2=  \langle (A-\lambda I_n) x,x \rangle\leq 0$ for all $x\in \R^n_{++}$, which implies that $[\varphi_A\leq c]=\R^n_{++}$.
\end{proof}
The next theorem exhibits a series of implications and, in particular, conditions which imply   that the quadratic function   $q_A$   is spherically 
quasi-convex.
\begin{theorem}\label{Thm:Copositive}
      Let $A\in\mathbb R^{n\times n}$ be a symmetric matrix,
       $\lambda_1\le\lambda_2\leq...\leq\lambda_n$ its eigenvalues.
	Consider the following statements:
      \begin{enumerate}
	       \item[(i)] $q_A$ is spherically quasiconvex.
	       \item[(ii)] $A$ is a Z-matrix.
              \item[(iii)] $\lambda_2I_n-A$ is copositive and
			there exist an eigenvector $v^1\in{\mathbb R}^n_+$
			of $A$ corresponding to the eigenvalue $\lambda_1$
			of $A$.
  	       \item[(iv)] $A$ is a Z-matrix and
			$\lambda_2\geq a_{ii}$ for
			any $i\in\{1,2,\ldots,n\}$.
              \item[(v)] $A$ is a Z-matrix $\lambda_1<\lambda_2$ and
			$\lambda_2\geq a_{ii}$ for any
			$i \in \{1,2,\ldots,n\}$.
              \item[(vi)] $A$ is an irreducible Z-matrix and
			$\lambda_2\geq a_{ii}$ for any
			$i\in\{1,2,\ldots,n\}$.
      \end{enumerate}
	Then, the following implications hold:

	\noindent\hspace{12mm}(v)

	\hspace{5.5mm} \noindent $\Downarrow$
	
	\noindent(iv)$\Leftarrow$(iii)$\Rightarrow$(i)$\Rightarrow$(ii)
	
	\hspace{5mm} \noindent $\Uparrow$
	
	\noindent\hspace{10.5mm}(vi)
	
\end{theorem}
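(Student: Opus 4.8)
My plan is to prove the diagram arrow by arrow, using as the central translation device Theorem~\ref{th:quasiconv-iff} together with Proposition~\ref{pr:charb1}, which reduce the spherical quasi-convexity of $q_A$ to the convexity of all sublevel sets $[\varphi_A\le c]$; the arithmetic of these sublevel sets is then supplied by Lemmas~\ref{Lem:Basic} and~\ref{lem:Copositive}, the necessity of the Z-property by Corollary~\ref{pr:qpcf}, and the existence of a nonnegative eigenvector by the Perron--Frobenius Theorem~\ref{Perron-Frobenius theorem1}.

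The two cheap arrows I would dispatch first. For (i)$\Rightarrow$(ii), the orthant $\R^n_+$ is self-dual, so Corollary~\ref{pr:qpcf} gives that $A$ has the $\R^n_+$-Z-property; feeding the admissible pair $x=e^i$, $y=e^j$ with $i\ne j$ (so that $\langle e^i,e^j\rangle=0$) into $\langle Ax,y\rangle\le 0$ yields $a_{ij}=\langle Ae^i,e^j\rangle\le 0$, i.e.\ $A$ is a Z-matrix. For (iii)$\Rightarrow$(i), I would show every $[\varphi_A\le c]$ is convex by a case split on $c$: for $c\ge\lambda_2$, the copositivity of $\lambda_2 I_n-A$ lets Lemma~\ref{lem:Copositive} (with $\lambda=\lambda_2$) give $[\varphi_A\le c]=\R^n_{++}$; for $\lambda_1<\lambda_2$ and $c<\lambda_2$, I normalize the given $v^1\in\R^n_+$, complete it to an ordered orthonormal eigenbasis, and invoke Lemma~\ref{Lem:Basic} (whose range $c\notin[\lambda_2,\lambda_n)$ contains $c<\lambda_2$); finally, if $\lambda_1=\lambda_2$ the only range not yet covered is $c<\lambda_1$, where $[\varphi_A\le c]=\emptyset$ since $\varphi_A\ge\lambda_1$ on $\R^n_{++}$. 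These cases exhaust $\R$, so Theorem~\ref{th:quasiconv-iff} delivers (i).

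The arrow (iii)$\Rightarrow$(iv) then follows by composing the two previous ones---(iii)$\Rightarrow$(i)$\Rightarrow$(ii) makes $A$ a Z-matrix---and reading off $\lambda_2-a_{ii}=\langle(\lambda_2 I_n-A)e^i,e^i\rangle\ge 0$ from the copositivity. It remains to establish (v)$\Rightarrow$(iii) and (vi)$\Rightarrow$(iii). In both, the copositivity half of (iii) is elementary and Perron-free: for $x\in\R^n_+$ we have $\langle(\lambda_2 I_n-A)x,x\rangle=\sum_i(\lambda_2-a_{ii})x_i^2-\sum_{i\ne j}a_{ij}x_ix_j\ge 0$, because $\lambda_2\ge a_{ii}$ makes the first sum nonnegative while the Z-property $a_{ij}\le 0$ together with $x_i,x_j\ge 0$ makes each term of the second sum nonpositive. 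Hence the entire substance of these two arrows is the production of a nonnegative eigenvector $v^1$ of $A$ for its least eigenvalue $\lambda_1$.

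For that I would set $B=sI_n-A$ with $s\ge\max_i a_{ii}$; then $B$ is a nonnegative matrix with eigenvalues $s-\lambda_i$, its spectral radius is $s-\lambda_1$, and the wanted $v^1$ is exactly a Perron eigenvector of $B$. Under (vi), $A$ and hence $B$ are irreducible, so Theorem~\ref{Perron-Frobenius theorem1} hands over a strictly positive eigenvector $v^1\in\R^n_{++}\subset\R^n_+$ (and, via simplicity of the dominant eigenvalue, even forces $\lambda_1<\lambda_2$). The genuine difficulty is (v)$\Rightarrow$(iii): here $A$ is only assumed to be a Z-matrix, so $B$ may be reducible and the stated Perron--Frobenius theorem, covering only the positive and the irreducible cases, does not apply verbatim. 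I would circumvent this using symmetry: a symmetric reducible matrix is permutation-equivalent to a direct sum of irreducible symmetric Z-matrices, and the hypothesis $\lambda_1<\lambda_2$ forces a single block to realize $\lambda_1$ and to do so as a simple eigenvalue; applying Theorem~\ref{Perron-Frobenius theorem1} to that block and padding its positive eigenvector with zeros produces $v^1\in\R^n_+$. Equivalently, one may perturb to $B+\epsilon J$ with $J$ the all-ones matrix, use the positive case, and pass to the limit of the normalized Perron eigenvectors. Assembling these pieces closes the diagram.
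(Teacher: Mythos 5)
Your proof is correct, and its skeleton coincides with the paper's: (iii)$\Rightarrow$(i) via Lemma~\ref{lem:Copositive} for $c\ge\lambda_2$ and Lemma~\ref{Lem:Basic} for $c<\lambda_2$, then Theorem~\ref{th:quasiconv-iff}; (i)$\Rightarrow$(ii) via Corollary~\ref{pr:qpcf} with the self-dual cone $\R^n_+$; (iii)$\Rightarrow$(iv) by composing (iii)$\Rightarrow$(i)$\Rightarrow$(ii) and reading $\lambda_2-a_{ii}=\langle(\lambda_2 I_n-A)e^i,e^i\rangle\ge0$ off the copositivity; and (v),(vi)$\Rightarrow$(iii) via Perron--Frobenius applied to a nonnegative shift of $A$. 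Two of your refinements go beyond what the paper writes and in fact repair small gaps in its argument. First, in (iii)$\Rightarrow$(i) you treat the degenerate case $\lambda_1=\lambda_2$ separately (where $[\varphi_A\le c]=\emptyset$ for $c<\lambda_1$ because $\varphi_A\ge\lambda_1$ on $\R^n_{++}$); the paper simply cites Lemma~\ref{Lem:Basic}, whose hypothesis formally requires $\lambda_1<\lambda_2$. Second, and more substantially, for (v)$\Rightarrow$(iii) the paper merely says ``Perron--Frobenius theorem applied to $\lambda_2 I_n-A$'' (which it shows is entrywise nonnegative, equivalent to your direct copositivity computation), but its stated Theorem~\ref{Perron-Frobenius theorem1} covers only positive or irreducible nonnegative matrices, so under (v) alone --- where $\lambda_2 I_n-A$ may be reducible --- the citation does not apply verbatim. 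Your symmetric-block decomposition (a reducible symmetric matrix is permutation-similar to a direct sum of irreducible blocks, with $B_{12}=0$ forced by symmetry), using $\lambda_1<\lambda_2$ to isolate the unique block realizing $\lambda_1$ and padding its Perron eigenvector with zeros --- or equivalently the $\epsilon J$-perturbation limit --- supplies exactly the missing argument, and it also explains why hypothesis (v) includes $\lambda_1<\lambda_2$ while (vi) does not (irreducibility makes the dominant eigenvalue simple, so $\lambda_1<\lambda_2$ comes for free, as you observe). In short: same route, executed with more care than the source.
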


\begin{proof}
$\,$
\vspace{2mm}

(v)$\Rightarrow$(iii)$\Leftarrow$(vi):
It is easy to verify that $\lambda_2I_n-A$ is nonnegative and hence copositive. Moreover, Perron-Frobenius theorem applied to the matrix $\lambda_2I_n-A$ implies that there exist an eigenvector $v^1\in\mathbb R^n_+$  corresponding to the largest eigenvalue $\lambda_2-\lambda_1$ of $\lambda_2I_n-A$, which is also the eigenvector of $A$ corresponding to $\lambda_1$.

(iii)$\implies$(i):  If $c\le\lambda_2$, then Lemma~\ref{Lem:Basic} implies that  $[\varphi_A\leq c]$ is convex.  If $c\geq \lambda_2$, then from Lemma~\ref{lem:Copositive} we have  $[\varphi_A\leq c]=\R^n_{++}$, which is  convex. Hence,  $[\varphi_A\leq c]$ is convex for all $c\in \R$. Therefore, by using Theorem~\ref{th:quasiconv-iff}, we conclude that  $q_A$ is spherically quasi-convex function.

(i)$\implies$(ii): From Corollary 1, it follows that $A$ has the $\R^n_+$-Z-property, which is easy to check that is equivalent to $A$ being a
Z-matrix. 

(iii)$\implies$(iv):  Since (iii)$\implies$(i)$\implies$(ii), it follows that $A$ is a Z-matrix. Since $\lambda_2I_n-A$ is copositive it follows that its diagonal elements are nonnegative. Hence, $\lambda_2\ge a_{ii}$ for any $i\in\{1,2,\ldots,n\}$.

\end{proof}

\begin{corollary}\label{Cor:NegMatrix}
 Let $n\ge 2$ and   $ \lambda_1, \ldots , \lambda_n \in \R$ be  the eigenvalues of $A$.   Assume that    $-A$ is  a positive matrix, $ \lambda_1 < \lambda_2 \leq  \ldots \leq \lambda_n$  and $0< \lambda_2$ . Then    $q_A$ is spherically quasi-convex.
\end{corollary}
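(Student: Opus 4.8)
The plan is to derive this corollary directly from the chain of implications established in Theorem~\ref{Thm:Copositive}, specifically by verifying that the hypotheses here are exactly those needed to invoke implication (iii)$\Rightarrow$(i). First I would observe that since $-A$ is a positive matrix, its off-diagonal entries $-a_{ij}$ are strictly positive for $i\neq j$, so the off-diagonal entries $a_{ij}$ of $A$ are strictly negative, hence in particular nonpositive; this means $A$ is a Z-matrix by definition. Combined with the assumption $\lambda_1<\lambda_2$, this already matches the Z-matrix and strict-eigenvalue-gap portions of several statements in the theorem.

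The key step is to produce the nonnegative eigenvector required by (iii) and to check copositivity of $\lambda_2 I_n-A$. For the eigenvector, I would apply the Perron--Frobenius theorem (Theorem~\ref{Perron-Frobenius theorem1}) to the positive matrix $-A$: it has a dominant eigenvalue $\lambda_{max}(-A)>0$ with an associated eigenvector $v^1\in\R^n_{++}\subseteq\R^n_+$. Since $-Av^1=\lambda_{max}(-A)v^1$ gives $Av^1=-\lambda_{max}(-A)v^1$, and the dominant eigenvalue of $-A$ is the largest in absolute value among all eigenvalues of $-A$, the corresponding eigenvalue $-\lambda_{max}(-A)$ of $A$ is the \emph{smallest}, i.e.\ equal to $\lambda_1$. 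Thus $v^1\in\R^n_+$ is an eigenvector of $A$ corresponding to $\lambda_1$, giving the second half of (iii). For copositivity, I would argue that $\lambda_2 I_n - A$ is a nonnegative matrix: its off-diagonal entries are $-a_{ij}>0$ (from positivity of $-A$), and its diagonal entries are $\lambda_2-a_{ii}$, which are nonnegative because $\lambda_2\geq 0$ (indeed $0<\lambda_2$ by hypothesis) while $a_{ii}<0$ (the diagonal of $-A$ is positive, so $a_{ii}<0$). A nonnegative matrix is automatically copositive, since $\langle (\lambda_2 I_n-A)x,x\rangle\geq 0$ whenever $x\in\R^n_+$.

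Having verified both components, statement (iii) of Theorem~\ref{Thm:Copositive} holds, and the implication (iii)$\Rightarrow$(i) gives that $q_A$ is spherically quasi-convex, completing the proof. I anticipate the only subtle point will be the correct identification of the eigenvalue: one must confirm that the Perron--Frobenius dominant eigenvalue of $-A$, being the largest in modulus and positive, corresponds under negation to the \emph{minimal} eigenvalue $\lambda_1$ of $A$, and here the strict gap $\lambda_1<\lambda_2$ together with the simplicity of the dominant eigenvalue (part (ii) of Theorem~\ref{Perron-Frobenius theorem1}) guarantees this identification is unambiguous. Everything else reduces to the elementary observations that a positive $-A$ forces $A$ to be a Z-matrix with negative diagonal, and that a nonnegative matrix is copositive.
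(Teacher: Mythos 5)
Your proof is correct and takes essentially the same approach as the paper: both obtain a positive eigenvector for $\lambda_1$ from the Perron--Frobenius theorem (you apply it to $-A$, the paper to the positive matrix $\lambda_2 {\rm I_n}-A$, a trivial shift) and then conclude via Theorem~\ref{Thm:Copositive}. The only cosmetic difference is that the paper invokes the implication (v)$\Rightarrow$(i), whereas you verify (iii) directly by noting that $\lambda_2 {\rm I_n}-A$ is nonnegative and hence copositive --- which is exactly what the paper's internal proof of (v)$\Rightarrow$(iii) does anyway.
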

\begin{proof}
First note that the matrix $\lambda_2I_n - A$ is a positive matrix and  $\lambda_2-\lambda_1>0$  is its  largest eigenvalue. Thus,
Theorem~\ref{Perron-Frobenius theorem1}   implies that the eigenvalue $\lambda_2-\lambda_1$  has the  associated eigenvector $v_1\in \R^n_{++}$.
Since $(\lambda_2I_n - A)v_1=(\lambda_2-\lambda_1)v_1$ we have $Av_1=\lambda_1v_1$. Hence   $ v_1$ is also an eigenvector associated to
$\lambda_1$. Therefore,  considering that $A$ is a Z-matrix, $ v^1 \in \R^n_{+}$, $\lambda_1 < \lambda_2$ and $\lambda_2\ge a_{ii}$ for any
$i\in\{1,2\dots,n\}$,  it follows from Theorem~\ref{Thm:Copositive} (v)$\Rightarrow$(i) that $q_A$ is spherically quasi-convex.
\end{proof}

In the following two examples   we use   Theorem~\ref{Thm:Copositive} (iii)$\Rightarrow$(i) to present a class of   quadratic quasi-convex functions defined  in the spherical positive orthant.
\begin{example}\label{ex:countergg}
Let $n\geq  3$ and  $V=[v^1~ v^2~v^3~\cdots ~v^n] \in\R^{n\times n}$ be an orthogonal matrix, $A=V^\top\Lambda V$ and  $\Lambda:=\diag(\lambda,\mu, \ldots,\mu ,\nu)$, where $\lambda,\mu,\nu\in\R$. Then  $q_A$ is a spherically quasi-convex, whenever 
\begin{equation} \label{eq:ciqc}
v^1-\sqrt{\frac{\nu-\mu}{\mu-\lambda}}|v^{n}|\in {\mathbb R^n_{+}},   \qquad  \lambda<\mu<\nu, 
\end{equation}
where $|v^{n}|:=(|v_1^{n}|, \ldots, |v_n^{n}|)$.  Indeed,  by using that $VV^\top={\rm I_n}$ and $A=V^\top\Lambda V$,  after   some calculations we conclude that  
\begin{equation}\label{eq:ceq}
  \langle A x,x \rangle-\mu\|x\|^2= (\mu-\lambda) \left[-\langle v^1, x\rangle^2 +\frac{\nu-\mu}{\mu-\lambda}\langle v^n, x\rangle^2\right].
\end{equation}
 Thus,  using the    condition in \eqref{eq:ciqc} and considering that $x\in\R^n_{++}$,  we have 
$$
-\langle v^1, x\rangle^2 +\frac{\nu-c}{c-\lambda}\langle v^n, x\rangle^2\leq -\langle v^1, x\rangle^2 +\frac{\nu-\mu}{\mu-\lambda}\langle v^n, x\rangle^2\leq\frac{\nu-\mu}{\mu-\lambda}\left[-\langle |v^n|, x\rangle^2 +\langle v^n, x\rangle^2\right] \leq 0.
$$
Hence, by combining the last inequality with     \eqref{eq:ceq}, we conclude  that  $\mu {\rm I_n}-A$ is copositive. Therefore,  since $ v^1 \in \R^n_{+}$  we can apply  
Theorem~\ref{Thm:Copositive} (iii)$\Rightarrow$(i) with $\lambda_2=\mu$ to conclude that $q_A$ is a spherically quasi-convex function. For instance,  taking  $\lambda<(\lambda+\nu)/2<\mu<\nu$ the vectors    $v^1=(e^1+e^n)/\sqrt{2}, v^2=e^2, ~ \ldots, ~ v^{n-1}=e^{n-1}, v^n=(e^1-e^n)/\sqrt2$,   satisfy \eqref{eq:ciqc}.
\end{example} 
\begin{example}\label{Ex:CountExMulEngfm}
Let $n\geq  3$ and  $V=[v^1~ v^2~v^3~\cdots ~v^n] \in\R^{n\times n}$ be an orthogonal matrix, $A=V^\top\Lambda V$ and  $\Lambda=\diag(\lambda_1, \ldots, \lambda_n)$. Then  $q_A$ is a spherically quasi-convex, whenever
\begin{equation} \label{eq:CondEngfm}
 v^1=(v^1_1, \ldots, v^1_n)\in {\mathbb R^n_{++}}, \qquad \lambda_1<\lambda_2\leq \cdots \leq \lambda_n \leq\lambda_2 +\frac{\alpha^2}{(n-2)}(\lambda_2-\lambda_1),
\end{equation}
where $\alpha:=\min\{ v^1_i \neq 0:~  i=1, \ldots, n\}$.
Indeed,  by using that $VV^\top={\rm I_n}$ and  the definition of the matrix $A$   in  \eqref{eq:CondEngfm} we  obtain that  
$$
  \langle A x,x \rangle-\lambda_{2}\|x\|^2= (\lambda_1-\lambda_{2})\langle v^{1}, x\rangle^2+  (\lambda_3-\lambda_{2})\langle v^{3}, x\rangle^2+ \cdots +(\lambda_n-\lambda_{2})\langle v^{n}, x\rangle^2.
$$
Since    $\lambda_{2}-\lambda_1>0$ and   $0\leq \lambda_{j}-\lambda_{2}\leq \lambda_n-\lambda_{2}$,  for all $j=3, \ldots, n$, the last equality becomes 
\begin{equation} \label{eq:coprfm}
  \langle A x,x \rangle-\lambda_{2}\|x\|^2 \leq (\lambda_{2}-\lambda_1)\left[-\langle v^1, x\rangle^2 +  \frac{\lambda_n-\lambda_{2}}{\lambda_{2}-\lambda_1}\left( \langle v^{3}, x\rangle^2+ \cdots +   \langle v^n, x\rangle^2\right)\right]. 
\end{equation}
On the other hand, by using  that  $v^1_i\in {\mathbb R_{++}}$ and  $v^1_i \geq \alpha$ for all   $i=1, \ldots, n$, we obtain that 
\begin{equation} \label{eq:CopFi}
 \langle v^1, x\rangle^2= (v^1_1 x_1+ \cdots + v^1_n x_n)^2\geq \alpha^2  ( x_1+ \cdots + x_n)^2\geq  \alpha^2  ( x_1^2+ \cdots + x_n^2)=\alpha^2\|x\|^2, 
\end{equation}
for all   $x\in\R^n_{+}$. Moreover,   taking into account that $\|v^j\|=1$,  for all $j=3, \ldots, n$, after some algebraic manipulation, it follows that 
$$
 \langle v^{3}, x\rangle^2+ \cdots +   \langle v^n, x\rangle^2\leq \| v^{3}\|^2\|x\|^2+\cdots+ \| v^{n}\|^2\|x\|^2 \leq (n-2)\|x\|^2,
$$
for all   $x\in\R^n_{+}$. Thus, combining the last inequalities with   \eqref{eq:coprfm}  and \eqref{eq:CopFi}  and considering  that  the last inequality in \eqref{eq:CondEngfm}  is equivalent to     $ -\alpha^2+ (n-2)(\lambda_n-\lambda_{2})/(\lambda_{2}-\lambda_1) \leq 0$, we have 
$$
\langle A x,x \rangle-\lambda_{2}\|x\|^2 \leq (\lambda_{2}-\lambda_1)\left[-\alpha^2 + (n-2) \frac{\lambda_n-\lambda_{2}}{\lambda_{2}-\lambda_1}\right]\|x\|^2\leq 0, 
$$
for all   $x\in\R^n_{+}$. Hence, we conclude  that  $\lambda_{2} {\rm I_n}-A$ is copositive. Therefore, since $v^1\in{\mathbb R}^n_+$ is the eigenvector  of $A$ corresponding to the eigenvalue $\lambda_1$,  we apply Theorem~\ref{Thm:Copositive} (iii)$\Rightarrow$(i), to conclude that $q_A$ is spherically quasi-convex function.    For instance,    $n\geq  3$,  $A=V^\top\Lambda V$,  $\Lambda=\diag(\lambda_1, \ldots, \lambda_n)$ and $V=[v^1~ v^2~v^3~\cdots ~v^n] \in\R^{n\times n}$, where $\alpha=1/\sqrt{n}$, 
$$
v^1:= \frac{1}{\sqrt{n}} \sum_{i=1}^ne^i, \quad   v^j:= \frac{1}{\sqrt{(n+1-j)+(n+1-j)^2}}\left[e^1- (n+1-j)e^j + \sum_{i>j}^ne^i \right],
$$
for $j=2, \ldots, n$ and   $\lambda_1<\lambda_2\leq \ldots \leq \lambda_n <\lambda_2 +(1/[n(n-2)])(\lambda_2-\lambda_1)$, satisfy the
orthogonality of $V$ and the condition \eqref{eq:CondEngfm}.
\end{example}  
In the next theorem we establish  the characterization for  quasi-convex quadratic functions $q_{A}$ on the spherical positive orthant where $A$  is symmetric having only two distinct eigenvalues. 
\begin{theorem}\label{thm:2neg-v}
Let $n\ge 3$ and $A\in\R^{n\times n}$ be a  symmetric matrix with only two distinct eigenvalues, such that its smallest one has multiplicity one.  Then, $q_{A}$ is spherically quasi-convex if and only if there is an eigenvector  of $A$ corresponding  to the smallest eigenvalue with all components nonnegative.
\end{theorem}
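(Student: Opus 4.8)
The plan is to exploit the very rigid structure forced by having only two distinct eigenvalues. Write $\lambda_1<\lambda_2=\lambda_3=\cdots=\lambda_n$ for the eigenvalues, so that $\lambda_1$ is the simple smallest one, and let $v^1$ be a unit eigenvector associated with $\lambda_1$; since $\lambda_1$ is simple, $v^1$ is unique up to sign. Using the spectral decomposition together with $\sum_{i\ge 2}u^i(u^i)\tp={\rm I_n}-v^1(v^1)\tp$, one obtains the rank-one representation $A=\lambda_2{\rm I_n}-(\lambda_2-\lambda_1)v^1(v^1)\tp$, whence, for any $c\in\R$,
\[
\langle (A-c{\rm I_n})x,x\rangle=(\lambda_2-c)\|x\|^2-(\lambda_2-\lambda_1)\langle v^1,x\rangle^2.
\]
With this, the hypothesis ``$A$ has an eigenvector for $\lambda_1$ with nonnegative components'' is equivalent to ``$v^1\in\R^n_+$ or $-v^1\in\R^n_+$'', i.e. the entries of $v^1$ are not of strictly mixed signs.

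For the sufficiency, assume $v^1\in\R^n_+$ (replacing $v^1$ by $-v^1$ if needed). Then $\lambda_2{\rm I_n}-A=(\lambda_2-\lambda_1)v^1(v^1)\tp$ is positive semidefinite, hence copositive; together with the nonnegative eigenvector $v^1$ for $\lambda_1$, this is precisely statement (iii) of Theorem~\ref{Thm:Copositive}, and the implication (iii)$\Rightarrow$(i) there gives that $q_A$ is spherically quasi-convex. This direction is routine.

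The necessity is the heart of the matter, and I would prove its contrapositive. Suppose $v^1$ has a strictly positive entry and a strictly negative entry; set $P=\{i:v^1_i>0\}$ and $N=\{i:v^1_i<0\}$, both nonempty, and $\|v^1_P\|^2=\sum_{i\in P}(v^1_i)^2$, $\|v^1_N\|^2=\sum_{i\in N}(v^1_i)^2$, so that $\|v^1_P\|^2+\|v^1_N\|^2=\|v^1\|^2=1$ and both are strictly positive. By Corollary~\ref{cor:cor} it suffices to exhibit one $c$ for which the cone $\{x\in\R^n_+:\langle(A-c{\rm I_n})x,x\rangle\le 0\}$ fails to be convex. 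Choosing $c\in(\lambda_1,\lambda_2)$ with $\beta:=(\lambda_2-c)/(\lambda_2-\lambda_1)\in(0,\min\{\|v^1_P\|^2,\|v^1_N\|^2\})$ — possible since both numbers are positive and sum to $1$ — this cone equals $\{x\in\R^n_+:\langle v^1,x\rangle^2\ge\beta\|x\|^2\}$. Take $x\in\R^n_+$ supported on $P$ with $x_i=v^1_i$ for $i\in P$, so that $\langle v^1,x\rangle/\|x\|=\|v^1_P\|>\sqrt\beta$ and $\langle v^1,x\rangle>0$; symmetrically take $y\in\R^n_+$ supported on $N$ with $y_i=-v^1_i$ for $i\in N$, so that $|\langle v^1,y\rangle|/\|y\|=\|v^1_N\|>\sqrt\beta$ and $\langle v^1,y\rangle<0$. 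Both $x$ and $y$ lie in the cone, yet along the segment $t\mapsto(1-t)x+ty$ the scalar $\langle v^1,(1-t)x+ty\rangle$ is continuous, positive at $t=0$ and negative at $t=1$, hence vanishes at some $t^*\in(0,1)$; the corresponding point $z=(1-t^*)x+t^*y\in\R^n_+$ is nonzero (its $P$-entries equal $(1-t^*)v^1_i>0$), while $\langle v^1,z\rangle^2=0<\beta\|z\|^2$, so $z$ is not in the cone. Thus the cone is not convex, and Corollary~\ref{cor:cor} (equivalently Theorem~\ref{th:quasiconv-iff}) yields that $q_A$ is not spherically quasi-convex, completing the contrapositive.

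The main obstacle is organizing this necessity argument: one must first recognize that the two-eigenvalue assumption collapses the Rayleigh quotient to a single rank-one term, and then select the threshold $c$ (equivalently $\beta$) inside the precise window that simultaneously admits points of the sublevel cone supported on the positive and on the negative parts of $v^1$. The identity $\|v^1_P\|^2+\|v^1_N\|^2=1$ is what guarantees such a $\beta\in(0,1)$ exists whenever both parts are nonempty; once the sign change of $\langle v^1,\cdot\rangle$ along the segment is noticed, the non-convexity is immediate.
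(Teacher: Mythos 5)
Your proposal is correct, but its necessity direction takes a genuinely different route from the paper's. On sufficiency you and the paper essentially coincide: your rank-one identity $A=\lambda_2{\rm I_n}-(\lambda_2-\lambda_1)v^1(v^1)\tp$ makes $\lambda_2{\rm I_n}-A$ visibly positive semidefinite, and invoking Theorem~\ref{Thm:Copositive} (iii)$\Rightarrow$(i) is a repackaging of the paper's direct appeal to Lemma~\ref{Lem:Basic} (with $\lambda_2=\dots=\lambda_n$ the excluded interval $[\lambda_2,\lambda_n)$ there is empty, so every sublevel set of $\varphi_A$ is convex). For necessity, however, the paper argues algebraically: from $\Lambda=\mu{\rm I_n}+(\lambda-\mu)D$ it reads off the off-diagonal entries $a_{ij}=(\lambda-\mu)v^1_iv^1_j$ for $i\ne j$, applies Proposition~\ref{pr:spher-quasiconv}(b) at pairs of canonical unit vectors $e^i,e^j\in\SP^{n-1}\cap\R^n_+$ to force $a_{ij}\le 0$ (the Z-matrix condition, cf.\ (i)$\Rightarrow$(ii) of Theorem~\ref{Thm:Copositive}), and concludes $v^1_iv^1_j\ge 0$ for all $i\ne j$, i.e.\ $v^1\in\R^n_+$ or $-v^1\in\R^n_+$ --- two lines, given the machinery. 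You instead prove the contrapositive geometrically: choosing $c\in(\lambda_1,\lambda_2)$ so that $\beta=(\lambda_2-c)/(\lambda_2-\lambda_1)<\min\{\|v^1_P\|^2,\|v^1_N\|^2\}$, you produce two points of the sublevel cone $\{x\in\R^n_+:\langle v^1,x\rangle^2\ge\beta\|x\|^2\}$ supported on the positive and negative parts of $v^1$, together with a point of the segment between them lying outside the cone, and conclude via Corollary~\ref{cor:cor}. The details check out: simplicity of $\lambda_1$ correctly reduces the eigenvector hypothesis to ``$\pm v^1\in\R^n_+$''; the identity $\|v^1_P\|^2+\|v^1_N\|^2=1$ survives zero entries of $v^1$; and the intermediate point $z$ is nonzero because $t^*\in(0,1)$. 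Your construction is in the same spirit as the paper's Lemmas~\ref{Lem:DiagPosCond} and~\ref{cor:DiagNegCond} (two admissible vectors whose combination violates the cone inequality), and it buys something the paper's argument does not --- an explicit level $c$ and an explicit segment witnessing the failure of quasi-convexity --- at the cost of being longer than the paper's Z-matrix shortcut, which gets necessity almost for free from the first-order characterization.
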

\begin{proof}
Let   $A:=(a_{ij})\in\R^{n\times n}$,   $\lambda_1, \lambda_2,\dots, \lambda_n$  be the  eigenvalues of $A$  corresponding to an orthonormal set
of eigenvectors  $v^1, v^2,\dots,v^n$, respectively.  Then, we  can assume without lose of generality that $\lambda_1=:\lambda <\mu:=\lambda_2=\dots=\lambda_n. $
Thus,  we have
\begin{equation} \label{eq:VLVT}
A=V\Lambda V^T, \qquad  V:=[v^1\textrm{ }v^2\textrm{  }\dots\textrm{ }v^n]\in\R^{n\times n}, \qquad  \Lambda:= \diag(\lambda,\mu, \ldots, \mu) \in\R^{n\times n}.
\end{equation}
First we assume that  $q_{A}$ is a spherically quasi-convex function. The matrix $\Lambda$ can be equivalently written as follows
\begin{equation} \label{eq:eqtr}
 \Lambda:=\mu {\rm I_n}+(\lambda-\mu)D,
\end{equation}
where $D:=(d_{ij})\in\R^{n\times n}$ has  all entries $0$ except the $d_{11}$ entry which is $1$.  Then \eqref{eq:eqtr} and \eqref{eq:VLVT} imply
\begin{equation} \label{eq:neg-v}
a_{ij}=(\lambda-\mu)v^1_iv^1_j\,  \qquad i\ne j.
\end{equation}
Since $q_{A}$ is spherically quasi-convex and $e^i \in {\cal C} $ for all $i=1, \ldots, n$, by  using item $(b)$ of
Proposition~\ref{pr:spher-quasiconv} we conclude that $a_{ij}\leq 0$ for all $i, j=1, \ldots, n$ with $i\ne j$. Thus,   owing  that  $\lambda <\mu
$, we obtain   form \eqref{eq:neg-v} that $0\leq v^1_iv^1_j $ for all $i\ne j$, which implies $v^1\in \R^n_+$ or  $-v^1\in \R^n_+$.  Therefore,  there is an eigenvector corresponding  to the smallest eigenvalue with all components nonnegative.   Reciprocally,  
 assume that $v^1\in \R^n_{+}$. Then, applying   Lemma~\ref{Lem:Basic}  with $ \lambda =\lambda_1<\mu=\lambda_2=\dots=\lambda_n$ we conclude that   $[\varphi_A\leq c]$ is
convex for any $c\in\R$, and then  $\varphi_A$ is quasi-convex. Therefore, by using Theorem~\ref{th:quasiconv-iff}, we conclude that $q_{A}$ is spherically quasi-convex function.
\end{proof}
In the following example we present a class  of matrices  satisfying the assumptions of Theorem~\ref{thm:2neg-v}.
\begin{example}
Let $v\in  \R^n_{+}$ and define the Householder matrix $H:={\rm I_n}-2vv^{T}/\|v\|^2$. The matrix  $H$ is nonsingular and    symmetric. Moreover,
$Hv=-v$ and letting $E:=\{ u\in \R^n~:~ \langle v, u \rangle=0\}$ we have $Hu=u$ for all $u\in E$.   Since the dimension of $E$ is $n-1$,   we
conclude that  $-1$ and $1$ are  eigenvalues of $H$  with multiplicities one and  $n-1$, respectively. Furthermore,   the eigenvector corresponding
to the smallest eigenvalue of $H$ has all components nonnegative. Therefore, Theorem~\ref{thm:2neg-v} implies that   $q_{H}(x)=\langle
Hx,x\rangle$ is spherically quasi-convex.
\end{example}
In order to  give a {\it complete characterization of the spherical quasiconvexity of $q_A$ for the case when $A$ is diagonal}, in the following result we 
start with a necessary condition for $q_A$ to be spherically quasi-convex on the spherical positive orthant.
\begin{lemma}\label{Lem:DiagPosCond}
Let $n\ge 3$, ${\cal C}=\SP^{n-1}\cap \R^n_{++}$ and $A\in\R^{n\times n}$ be a nonsingular   diagonal matrix.  If  $q_{A}$ is  spherically
quasi-convex, then  $A$ has only two distinct eigenvalues, such that its smallest one has multiplicity one.
\end{lemma}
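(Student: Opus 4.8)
The plan is to convert spherical quasi-convexity of $q_A$ into a convexity statement about explicit sublevel cones and then extract the eigenvalue constraints from a three-index obstruction. Since $A=\diag(a_1,\dots,a_n)$, its eigenvalues are exactly the diagonal entries $a_1,\dots,a_n$, with the canonical vectors $e^i$ as corresponding eigenvectors. By Corollary~\ref{cor:cor} applied with $\mathcal K=\R^n_+$ (which is self-dual, hence a proper subdual cone), spherical quasi-convexity of $q_A$ is equivalent to the convexity, for every $c\in\R$, of the cone
\[
C_c:=\Big\{x\in\R^n_+:\ \sum_{i=1}^n(a_i-c)\,x_i^2\le 0\Big\}.
\]
The whole argument rests on the following combinatorial claim: there are no three distinct indices $i,j,k$ with $a_i<a_k$ and $a_j<a_k$; in words, at most one diagonal entry can lie strictly below the largest one.

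First I would prove the claim by contradiction. Suppose such $i,j,k$ exist and fix $c$ with $\max\{a_i,a_j\}<c<a_k$, so that $\alpha:=c-a_i>0$, $\beta:=c-a_j>0$ and $\gamma:=a_k-c>0$. Working in the closed orthant (which is precisely what Corollary~\ref{cor:cor} permits, and the reason I use it rather than reasoning in the open orthant) I would take the points $u,w\in\R^n_+$ supported on $\{i,j,k\}$ defined by $u_i=\sqrt{\gamma}$, $u_k=\sqrt{\alpha}$ and $w_j=\sqrt{\gamma}$, $w_k=\sqrt{\beta}$, with all remaining coordinates zero. Each of $u,w$ satisfies the defining inequality of $C_c$ with equality, so $u,w\in C_c$; but for the midpoint $m=(u+w)/2$ a direct evaluation gives $\sum_i(a_i-c)m_i^2=\tfrac{\gamma}{2}\sqrt{\alpha\beta}>0$, whence $m\notin C_c$. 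This contradicts convexity of $C_c$ and establishes the claim. The one computation to watch is this midpoint evaluation: its positivity comes entirely from the cross term $2\sqrt{\alpha\beta}$ produced by $(\sqrt{\alpha}+\sqrt{\beta})^2$ in the $k$-coordinate, and that cross term is the analytic heart of the non-convexity.

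Finally I would read off the conclusion. Let $\lambda_n=\max_i a_i$ and $M=\{i:a_i=\lambda_n\}$. If two indices lay strictly below $\lambda_n$, then choosing any $k\in M$ together with those two indices would violate the claim; hence at most one index is strictly below $\lambda_n$, i.e.\ $|M|\ge n-1$. Since $n\ge 3$ this is a genuine restriction, and the standing assumption of this subsection that $A$ has at least two distinct eigenvalues rules out $|M|=n$. Therefore $|M|=n-1$: exactly one diagonal entry is strictly smaller than $\lambda_n$ and the other $n-1$ entries equal $\lambda_n$. Thus $A$ has precisely two distinct eigenvalues and the smallest one has multiplicity one, as asserted. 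I expect the main difficulty to be organizational rather than technical: phrasing the correct equivalence through Corollary~\ref{cor:cor} so that boundary points are admissible, and isolating the three-index obstruction; once these are set up, the midpoint computation and the counting argument are routine.
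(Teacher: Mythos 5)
Your proposal is correct and follows essentially the same route as the paper: both reduce via Corollary~\ref{cor:cor} to convexity of the sublevel cones in the closed orthant, and both exhibit two boundary rays supported on three coordinates (two eigenvalues below the threshold $c$, one above, with square-root coefficients) whose midpoint---the paper uses the sum, computing $\langle A(v^1+v^2),v^1+v^2\rangle=2\sqrt{\lambda_1\lambda_2}>0$---exits the cone through exactly the cross term $2\sqrt{\alpha\beta}$ you identify. Your only departures are organizational: you fold the paper's shift $A_c=A-c\,{\rm I_n}$ into the choice of $c$ from the start, and you package the paper's two-case analysis (three distinct eigenvalues, or smallest eigenvalue of multiplicity greater than one) as a single combinatorial claim, which is an equivalent rearrangement of the same argument.
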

\begin{proof}
The proof will be made by absurd.  First  we  assume  that  $A$ has  at least three distinct eigenvalues, among which exactly two are negative,  or  at
least two  distinct eigenvalues, among which exactly one is negative and has multiplicity greater than one, i.e., 
\begin{equation} \label{eq:DefVec}
Ae^1= -\lambda_1 e^1, \qquad   Ae^2= -\lambda_2 e^2,  \quad Ae^3= \lambda_3 e^3,  \qquad \lambda_1, \lambda_2, \lambda_3 >0
\end{equation}
with   $-\lambda_1<  -\lambda_2<0< \lambda_3$  or   $-\lambda_1=  -\lambda_2<0<\lambda_3$   and $e^1, e^2,e^3$ are  canonical vectors of $\R^n$. Define the following two  auxiliaries vectors 
\begin{equation} \label{eq:AuxVec}
v^1:=e^1+t_1e^3, \qquad  v^2:=e^2+t_2e^3, \qquad  t_i=\sqrt{\frac{\lambda_i}{\lambda_3}}, \qquad i=1,2.
\end{equation}
Hence, \eqref{eq:DefVec} and \eqref{eq:AuxVec} implies that $\langle Av^1, v^1\rangle =0$ and $\langle Av^2, v^2\rangle =0$ and owing that 
$v^1, v^2\in \R^n_{+}$, we  conclude that $v^1, v^2 \in \left\{x\in\R^n_{+}~:~\langle Ax, x\rangle \leq 0\right\}$.  However, using again \eqref{eq:DefVec} and \eqref{eq:AuxVec} we obtain that 
$$
\langle A(v^1+v_2), v^1+v_2 \rangle =2\sqrt{\lambda_1 \lambda_2} >0, 
$$
and then $v^1+v^2\notin  \left\{x\in\R^n_{+}~:~\langle Ax, x\rangle \leq 0\right\}= \textrm{closure}[q_A\leq 0]$. Thus,   the cone
$\textrm{closure}[q_A\leq 0]$  is not convex.  Finally,  assume by absurd that  $A$  has at least three  distinct eigenvalues or  at least
two distinct ones with the   smallest one  having  multiplicity greater  than one. Let  $\lambda, \mu, \nu$ be  eigenvalues of $A$ such  that
$\lambda <  \mu < \nu$ or $\lambda =  \mu < \nu$. Take  a constant $c\in \R$ such that  $\mu<c<\nu$.  Letting  $A_{c}:=A-c{\rm I_n}$ we conclude
that  $\lambda-c, \mu-c, \nu-c$ are   eigenvalues of $A_{c}$ and satisfy  $\lambda -c<\mu-c<0<\nu-c$ or $\lambda -c=\mu-c<0<\nu-c$.  Hence, by
the first part of the proof, with $A_c$ in the role of $A$, we conclude that $\textrm{closure}[q_{A_{c}}\leq 0]$  is not convex. 
Therefore,  Corollary~\ref{cor:cor} implies that $q_A$ is not spherically quasi-convex, which is absurd and the proof  is  complete. 
\end{proof}

The next result gives a full  characterization for $q_A$ to be spherically quasi-convex quadratic  function  on the spherical positive orthant,
where  {\it $A$  is a diagonal matrix}. The proof of  this result  is a combination of Theorem~\ref{thm:2neg-v}, Lemma~\ref{Lem:DiagPosCond} and
\cite[Theorem 1]{FerreiraNemeth2017}. Before presenting the result we need the following definition: A function is called \emph{merely spherically
quasi-convex} if it is spherically quasi-convex, but it is not spherically convex.
\begin{theorem}\label{cr:main}
Let $n\ge 3$ and $A\in\R^{n\times n}$ be a nonsingular   diagonal matrix. Then, $q_A$ is merely spherically quasi-convex if and only if  $A$ has
only two eigenvalues, such that its smallest one has multiplicity one and has a corresponding eigenvector with all components nonnegative.
\end{theorem}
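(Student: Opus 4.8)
The plan is to prove the equivalence by handling its two directions separately, extracting the quasi-convexity from the two results already established in this subsection and the non-convexity (the word ``merely'') from the external convexity characterization \cite[Theorem 1]{FerreiraNemeth2017}. Throughout I use that a diagonal matrix is symmetric, so that Theorem~\ref{thm:2neg-v} applies verbatim, and that for a diagonal matrix the distinct eigenvalues are exactly the distinct diagonal entries.

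For the direct implication, suppose $q_A$ is merely spherically quasi-convex; in particular $q_A$ is spherically quasi-convex. Since $n\ge 3$ and $A$ is nonsingular and diagonal, Lemma~\ref{Lem:DiagPosCond} yields that $A$ has exactly two distinct eigenvalues and that the smallest of them has multiplicity one. With this eigenvalue structure in hand, Theorem~\ref{thm:2neg-v} becomes applicable, and because $q_A$ is spherically quasi-convex it provides an eigenvector associated with the smallest eigenvalue all of whose components are nonnegative. This is exactly the asserted condition; note that only quasi-convexity was used here, the failure of convexity being irrelevant for this direction.

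For the converse, assume $A$ has two distinct eigenvalues, the smallest of multiplicity one and admitting a nonnegative eigenvector. Theorem~\ref{thm:2neg-v} immediately gives that $q_A$ is spherically quasi-convex, so it only remains to check that $q_A$ fails to be spherically convex. This is where \cite[Theorem 1]{FerreiraNemeth2017} enters: since $A$ has two distinct eigenvalues it is not a scalar multiple of the identity, and that characterization of the spherically convex quadratic functions on the spherical positive orthant rules out convexity in precisely this situation. Hence $q_A$ is spherically quasi-convex but not spherically convex, that is, merely spherically quasi-convex, completing the proof.

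I expect the only delicate point to be this last step. One must quote the exact statement of \cite[Theorem 1]{FerreiraNemeth2017} and verify that the hypothesis ``$A$ has two distinct eigenvalues'' is exactly what excludes the spherically convex case there (intuitively, convexity forces all diagonal entries to coincide, since along the boundary arc joining $e^i$ to $e^j$ the restriction $a_i\cos^2\theta+a_j\sin^2\theta$ has a sign-changing second derivative whenever $a_i\ne a_j$, and openness of ${\cal C}$ propagates this failure to nearby interior geodesics). Everything else is a routine assembly of Lemma~\ref{Lem:DiagPosCond} and Theorem~\ref{thm:2neg-v}.
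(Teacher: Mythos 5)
Your proposal is correct and is essentially the paper's own proof: the paper gives no more detail than that Theorem~\ref{cr:main} is ``a combination of Theorem~\ref{thm:2neg-v}, Lemma~\ref{Lem:DiagPosCond} and \cite[Theorem 1]{FerreiraNemeth2017}'', which is precisely your assembly (Lemma~\ref{Lem:DiagPosCond} plus Theorem~\ref{thm:2neg-v} for necessity; Theorem~\ref{thm:2neg-v} for quasi-convexity and the cited convexity characterization to rule out spherical convexity for sufficiency). One caveat: your aside that only quasi-convexity is used in the forward direction holds only because of the section's standing assumption that $A$ has at least two distinct eigenvalues---without it, $A=\lambda\,{\rm I_n}$ gives a constant (hence quasi-convex) $q_A$ violating the conclusion of Lemma~\ref{Lem:DiagPosCond}, and it is exactly the word \emph{merely} that excludes this case.
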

We end this section by showing that, if a symmetric matrix $A$ has  three eigenvectors in the nonnegative  orthant  associated to at least two distinct 
eigenvalues, then the associated quadratic function  $q_A$  cannot  be   spherically quasi-convex.
\begin{lemma}\label{cor:DiagNegCond}  
Let $n\ge 3$ and   $v^1, v^2, v^3 \in \R^n$  be  distinct  eigenvectors   of a symmetric matrix $A$  associated  to the eigenvalues  $\lambda_1,
\lambda_2, \lambda_3 \in \R$, respectively, among which at least two are distinct.  If   $q_A$ is   spherically quasi-convex,  then $v^i \notin  \R^n_{+}$ 
for some $i=1,2,3$.
\end{lemma}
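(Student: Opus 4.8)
The plan is to argue by contradiction and reduce the claim, via Corollary~\ref{cor:cor}, to the non-convexity of a single sublevel cone. Suppose that $q_A$ is spherically quasi-convex and, contrary to the conclusion, that $v^1,v^2,v^3\in\R^n_+$; after normalizing we may take $\|v^i\|=1$. By Corollary~\ref{cor:cor}, for every $c\in\R$ the cone $\{x\in\R^n_+:\langle A_cx,x\rangle\le 0\}$, with $A_c:=A-c{\rm I_n}$, must be convex. Two elementary facts will drive the construction: first, $\langle A_cv^i,v^i\rangle=\lambda_i-c$, since $v^i$ is a unit eigenvector; second, because $A=A^\top$, eigenvectors associated with distinct eigenvalues are orthogonal, and any two orthogonal vectors in $\R^n_+$ necessarily have disjoint supports.

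The heart of the argument mimics the construction in the proof of Lemma~\ref{Lem:DiagPosCond}: I would exhibit two nonnegative vectors on the boundary of the cone whose sum falls strictly outside it. Reorder the eigenvalues so that $\lambda_1\le\lambda_2\le\lambda_3$, so that $\lambda_1<\lambda_3$, and first treat the principal case $\lambda_2<\lambda_3$. Fix $c$ with $\lambda_2<c<\lambda_3$, whence $\lambda_1-c<0$, $\lambda_2-c<0$ and $\lambda_3-c>0$, and set
\begin{equation*}
w:=v^1+t_1v^3,\qquad w':=v^2+t_2v^3,\qquad t_i:=\sqrt{\frac{c-\lambda_i}{\lambda_3-c}}>0,\quad i=1,2.
\end{equation*}
Then $w,w'\in\R^n_+$, and using $v^1\perp v^3$, $v^2\perp v^3$ the cross terms vanish, giving $\langle A_cw,w\rangle=(\lambda_1-c)+t_1^2(\lambda_3-c)=0$ and likewise $\langle A_cw',w'\rangle=0$; hence $w,w'$ lie in the cone. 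A direct expansion then yields
\begin{equation*}
\langle A_c(w+w'),w+w'\rangle=2t_1t_2(\lambda_3-c)+2(\lambda_1-c)\langle v^1,v^2\rangle.
\end{equation*}
When $\lambda_1\neq\lambda_2$ we have $v^1\perp v^2$, the last term drops, and the value equals $2t_1t_2(\lambda_3-c)>0$; thus $w+w'$ lies outside the cone, contradicting its convexity. This settles the case of three pairwise distinct eigenvalues, and more generally any case in which the largest of $\lambda_1,\lambda_2,\lambda_3$ is attained by a single one of the three eigenvectors while the two smaller eigenvalues differ.

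The main obstacle is the coincidence of eigenvalues, which is precisely where care is needed. If the two smaller eigenvalues coincide ($\lambda_1=\lambda_2<\lambda_3$), the residual term $2(\lambda_1-c)\langle v^1,v^2\rangle$ is nonpositive and no longer automatically zero; here I would exploit that $v^1,v^2$ span a common eigenspace and, when possible, replace them by an orthogonal nonnegative pair to recover the clean computation. The genuinely delicate situation is when the largest eigenvalue is shared by two of the eigenvectors ($\lambda_2=\lambda_3$), for then one cannot place a single eigenvalue strictly above two others and the two-negative--one-positive signature that breaks convexity is unavailable inside $\spa\{v^1,v^2,v^3\}$. In this case I would instead lean on the orthogonality relations $v^1\perp v^2$ and $v^1\perp v^3$ together with nonnegativity: these force the supports of $v^2,v^3$ to avoid that of $v^1$, so that a strictly positive $v^1$ would at once preclude the nonnegative eigenvectors $v^2,v^3$, and the remaining boundary configurations must be analysed through their disjoint-support structure. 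I expect this coincidence-of-the-top-eigenvalue case to be the crux of the proof.
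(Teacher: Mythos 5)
Your main line is exactly the paper's argument: the paper first treats the sign-normalized case $\lambda_1\le\lambda_2<0<\lambda_3$ with the same two boundary vectors $w^1=v^1+t_1v^3$, $w^2=v^2+t_2v^3$, shows $\langle A(w^1+w^2),w^1+w^2\rangle>0$, and then reduces the general case by passing to $A_c=A-c{\rm I_n}$ and invoking Corollary~\ref{cor:cor}; building the shift $c$ in from the start, as you do, is the same proof. Your hedge in the sub-case $\lambda_1=\lambda_2<\lambda_3$, however, would fail as stated: an orthogonal nonnegative pair in $\spa\{v^1,v^2\}$ need not exist (for $v^1=(1,1,0,\dots,0)\tp/\sqrt2$ and $v^2=(1,0,1,0,\dots,0)\tp/\sqrt2$ the nonnegative vectors of the span form a cone of opening angle $60^\circ$, which contains no orthogonal pair). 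But the hedge is unnecessary: your own displayed identity already closes this case, since $t_1=t_2=\sqrt{(c-\lambda_1)/(\lambda_3-c)}$ gives $\langle A_c(w+w'),w+w'\rangle=2(c-\lambda_1)\bigl(1-\langle v^1,v^2\rangle\bigr)>0$, because distinct unit vectors satisfy $\langle v^1,v^2\rangle<1$. Completed this way, your computation is actually more careful than the paper's, which simply asserts $\langle v^i,v^j\rangle=0$ for all $i\ne j$ --- unjustified when two of the three eigenvalues coincide.

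The case $\lambda_2=\lambda_3$ that you leave open is a genuine gap in your write-up, but your instinct that it is the crux is vindicated in the strongest possible way: that case cannot be closed, because the statement fails there. Take $A=\diag(\lambda,\mu,\mu)$ with $\lambda<\mu$ and $v^1=e^1$, $v^2=e^2$, $v^3=e^3$: all three eigenvectors are nonnegative and two eigenvalues are distinct, yet $q_A$ is spherically quasi-convex by Theorem~\ref{thm:2neg-v} (equivalently, by Lemma~\ref{Lem:Basic} and Theorem~\ref{th:quasiconv-iff}); indeed each sublevel cone $\{x\in\R^3_+:(\lambda-c)x_1^2+(\mu-c)(x_2^2+x_3^2)\le0\}$ is $\{0\}$, a ray, a convex circular nappe intersected with the orthant, or the whole orthant --- precisely the convexity-inside-the-span obstruction you identified. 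The paper's own treatment of this case is defective: its vectors $w^1=t_1v^1+v^3$ and $w^2=t_2v^1+v^3$ with $t_1=\sqrt{\lambda_2/(-\lambda_1)}$, $t_2=\sqrt{\lambda_3/(-\lambda_1)}$ coincide when $\lambda_2=\lambda_3$, so $\langle A(w^1+w^2),w^1+w^2\rangle=4\langle Aw^1,w^1\rangle=0$ and no contradiction follows (and the charitable repair $w^1=t_1v^1+v^2$ yields a cross term $\lambda_2(\langle v^2,v^3\rangle-1)\le0$, again no contradiction). Your disjoint-support remark locates the failure exactly: if $v^1\in\R^n_{++}$ then no nonzero nonnegative $v^2,v^3$ orthogonal to $v^1$ can exist, so the surviving configurations have $v^1$ on the boundary of $\R^n_+$, and these, as the example shows, are genuine counterexamples; the lemma needs an additional hypothesis (e.g.\ $\lambda_2\ne\lambda_3$ within the triple, or $v^1\in\R^n_{++}$) rather than a cleverer construction.
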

\begin{proof}
Assume by contradiction that   $v^1, v^2, v^3 \in \R^n_{+}$. Without loss of generality  we can also assume that $\|v^i\|=1$, for $i=1,2,3$.  We have three 
possibilities: $\lambda_1<  \lambda_2< \lambda_3$, $\lambda_1=  \lambda_2<\lambda_3$ or $\lambda_1<  \lambda_2=\lambda_3$.  We start by analyzing
the    possibilities $\lambda_1<  \lambda_2< \lambda_3$ or  $\lambda_1=  \lambda_2<\lambda_3$. First  we  assume  that   $\lambda_1<  \lambda_2<0<
\lambda_3$  or   $\lambda_1=  \lambda_2<0<\lambda_3$. Define the following   vectors 
\begin{equation} \label{eq:AuxVecg}
w^1:=v^1+t_1v^3, \qquad  w^2:=v^2+t_2v^3, \qquad  t_1=\sqrt{\frac{-\lambda_1}{\lambda_3}}, \qquad t_2=\sqrt{\frac{-\lambda_2}{\lambda_3}}.
\end{equation}
We have $\langle v^i, v^j\rangle =0$ for all  $i, j=1,2,3$ with $i\neq j$,  and 
\begin{equation} \label{eq:DefVecg}
Av^1= \lambda_1 v^1, \qquad   Av^2= \lambda_2 v^2,  \quad Av^3= \lambda_3 v^3,  \qquad v^1, v^2, v^3 \in \R^n_{+}, 
\end{equation} 
we conclude from  \eqref{eq:AuxVecg}  that $\langle Aw^1, w^1\rangle =0$ and $\langle Aw^2, w^2\rangle =0$.  Moreover, owing that $v^1, v^2, v^3 \in
\R^n_{+}$ we  conclude that $w^1, w^2 \in \left\{x\in\R^n_{+}~:~\langle Ax, x\rangle \leq 0\right\}$.  On the other hand, by using  \eqref{eq:DefVecg} and 
\eqref{eq:AuxVecg}, we obtain that 
$$
\langle A(w^1+w^2), w^1+w^2 \rangle =2\sqrt{\lambda_1 \lambda_2} >0, 
$$
and then $w^1+w^2\notin  \left\{x\in\R^n_{+}~:~\langle Ax, x\rangle \leq 0\right\}= \textrm{closure}[q_A\leq 0]$. Thus,   the cone
$\textrm{closure}[q_A\leq 0]$  is not convex.  Finally, for the general case,   take  a constant $c\in \R$ such that  $\lambda_2<c< \lambda_3$.
Letting  $A_{c}:=A-c{\rm I_n}$ we conclude that  $\lambda_1-c, \lambda_2-c, \lambda_3-c$ are   eigenvalues of $A_{c}$  and satisfying  $\lambda_1
-c<\lambda_2-c<0<\lambda_3-c$ or $\lambda_1 -c=\lambda_2-c<0<\lambda_3-c$ with the three corresponding   orthonormal eigenvectors $v^1, v^2, v^3
\in \R^n_{+}$.  Hence, by   the first part of the proof, with $A_c$ in the role of $A$, we conclude that the cone $\textrm{closure}[q_{A_{c}}\leq 0]$  is
not convex. Therefore,  Corollary~\ref{cor:cor} implies that $q_A$ is not spherically quasi-convex, which is a contradiction. To  analyze the possibility  
$\lambda_1<  \lambda_2=\lambda_3$, first assume that   $\lambda_1<0 <  \lambda_2= \lambda_3$ and define the vectors 
$$
w^1:=t_1v^1+v^3, \qquad  w^2:=t_2v^1+v^3, \qquad  t_1=\sqrt{\frac{\lambda_2}{-\lambda_1}}, \qquad t_2=\sqrt{\frac{\lambda_3}{-\lambda_1}}, 
$$
and  then proceed as above to obtain again a contradiction. Therefore, $v^i \notin  \R^n_{+}$ for some $i=1,2,3$ and the proof  is  complete. 
\end{proof}
\section{ Final remarks} \label{sec;FinalRemarks}
This paper is a continuation of   \cite{FerreiraIusemNemeth2013, FerreiraIusemNemeth2014, FerreiraNemeth2017}, where were studied  intrinsic
properties of the spherically  convex sets and functions.   As far as we know this  is the first study of spherically quasiconvex quadratic
functions. As an even more challenging problem, we will work towards developing efficient algorithms for  constrained optimization on spherically
convex sets. Minimizing a quadratic function on the spherical nonnegative orthant is of particular interest because the nonnegativity of the 
minimal value is equivalent to the copositivity of the corresponding matrix \cite[Proposition~1.3]{UrrutySeeger2010} and to the nonnegativity of 
its Pareto eigenvalues \cite[Theorem~4.3]{UrrutySeeger2010}. Considering the intrinsic geometrical properties of the sphere will open 
interesting perspectives for detecting the copositivity of a matrix. We foresee further progress in these topics in the nearby future. 
 \bibliographystyle{abbrv}
\bibliography{GenConvSphere}

\end{document}